\newcommand{\p}{\partial}
\renewcommand{\leq}{\leqslant}
\renewcommand{\geq}{\geqslant}
\newcommand{\rot}{\operatorname{\textbf{rot}}}
\newcommand{\curl}{\operatorname{\textbf{curl}}}
\newtheorem{theorem}{{\bf Theorem}}[section]
\newtheorem{definition}[theorem]{Definition}
\newtheorem{corollary}[theorem]{{\bf Corollary}}
\newtheorem{lemma}[theorem]{{\bf Lemma}}
\newtheorem{prop}[theorem]{{\bf Proposition}}
\newtheorem{remark}[theorem]{{\bf Remark}}
\begin{document}
\title{\textbf{Unique Continuation and Observability Estimates for 2-D Stokes Equations with the  Navier Slip  Boundary Condition}}
\date{}
\author{ Yuning Liu\thanks{Faculty of Mathematics,
University of Regensburg, D-93053,
Regensburg, Germany. (liuyuning.math@gmail.com). The author was partially supported by the University of Regensburg.}\quad\quad
Can Zhang\thanks{
School of Mathematics and Statistics, Wuhan University, Wuhan,
430072, China. (zhangcansx@163.com). The author was partially supported by the National Natural Science Foundation of China under grants
11161130003 and 11171264.} }

\maketitle
\abstract{This paper presents  a unique continuation estimate for 2-D Stokes equations with the Naiver slip boundary condition in a bounded and simply connected domain. Consequently, an observability estimate for this equation from a subset of positive measure in time follows from the aforementioned unique continuation estimate and the new strategy developed in \cite{unpublished1}. Several applications of the above-mentioned observability estimate to  control problems of the Stokes equations are given.}\\

\noindent{\bf Keywords.} Stokes equations, observability estimate, unique continuation estimate, bang-bang property\\

\noindent\textbf{AMS Subject Classifications.} 93B07, 35Q30,
35B60

\section{Introduction and Main Results}
$\;\;\;\;$Let $T>0$ and $\Omega\subset\mathbb{R}^{2}$ be a bounded and simply connected domain with  a $C^3$ boundary $\partial\Omega$.
Let $\mathbf{n}$ be the unit exterior normal vector to
$\partial\Omega$ and $\mathbf{\tau}$ be the unit tangent vector
to $\partial\Omega$ such that $(\mathbf{n},\mathbf{\tau})$
is positively oriented. Consider the following Stokes equations:
\begin{equation}\label{stokes}
\begin{cases}
\mathbf{u}_t-\Delta \mathbf{u}-\nabla p=0\;\;&\text{in}\;\;\Omega\times(0,T),\\
\nabla\cdot \mathbf{u}=0\;\;&\text{in}\;\;\Omega\times(0,T),\\
\rot\mathbf{u}=0,\;\;\mathbf{u}\cdot \mathbf{n}=0\;\;&\text{on}\;\;\partial\Omega\times(0,T).
\end{cases}
\end{equation}
The boundary condition  in \eqref{stokes} is a  special Navier slip boundary condition. In general, the Navier slip boundary condition reads (see for instance \cite{cor1})
\begin{equation}\label{naviercondition}
  \left\{\begin{array}{ll}
       \mathbf{u}\cdot\mathbf{n}=0,\\
       \bar{\sigma}\mathbf{u}\cdot\mathbf{\tau}+(1-\bar{\sigma})n_i\left(\frac{\p u_i}{\p x_j}+\frac{\p u_j}{\p x_i}\right)\tau_j=0,
     \end{array}
  \right.
  \end{equation}
  where $\bar{\sigma}$ is a constant in $[0,1)$. Here we used the Einstein's summation convention.

For the unique continuation of Stokes  equations, there have been many literatures. Here, we would like to mention the classical reference \cite{prolonge} (see also \cite{MR1900552}), where some qualitative unique continuation results are provided.  The controllability of Stokes equations or Navier-Stokes equations, with either the Dirichlet boundary condition or the Navier-slip boundary condition are investigated in a large number of references. For Stokes equations with Dirichlet boundary condition, we refer to \cite{MR2503028}, \cite{MR2103189} and the references therein. In  \cite{FursIm}, the controllability of the 2-D linearized Navier-Stokes equations with the same Navier-slip boundary condition as that in (\ref{stokes})  is systematically studied with the aid of the global Carleman inequality. In \cite{cor1}, Jean-Michel Coron studied the approximate controllability of the 2-D Navier-Stokes equations with the general Navier-slip boundary conditions  \eqref{naviercondition}.

  However, to our best knowledge,  very limited works are concerned with the quantitative unique continuation for Navier-Stokes equations. Here, we would like to mention the paper  \cite{kuc}
 where   an upper bound  was given for the size
of the nodal set of the vorticity for a solution (but not the nodal set of a solution) of the 2-D
periodic Navier-Stokes equations.

  The main purpose of this paper is to present the quantitative unique continuation for Equations (\ref{stokes}).
   It should be mentioned that the current study is  greatly motivated by  recent papers \cite{MR2652187}, \cite{unpublished1} and \cite{wangzhang}, where some kinds of unique continuation estimates for heat (or parabolic) equations were established. To present the main result of this paper, we begin by introducing the following notations:
\begin{equation*}\label{vectorvalued}
L^2_{\sigma}(\Omega)=\{\mathbf{u}\in L^2(\Omega ; \mathbb{R}^2):~\nabla\cdot\mathbf{u}=0,~\mathbf{u}
\cdot\mathbf{n}|_{\p\Omega}=0\};
\end{equation*}
\begin{equation*}\label{vectorvalued1}
H^1_{\sigma}(\Omega)=\{\mathbf{u}\in H^1(\Omega;\mathbb{R}^2):~\nabla\cdot\mathbf{u}=0,~\mathbf{u}
\cdot\mathbf{n}|_{\p\Omega}=0\}.
\end{equation*}
When $O$ is a subset in $\mathbb{R}^2$, we will write accordingly $L^2(O)$ and  $H^1(O)$
 for  $L^2(O;\mathbf{\mathbb{R}}^2)$ and $H^1(O;\mathbf{\mathbb{R}}^2)$, if there is no chance to make any confusion. We will denote by $(\cdot,\cdot)$ the usual inner product in $L^2(\Omega)$ and by $\chi_{E}$ the characteristic function of the subset $E$. In this paper, $N(\cdot)$ stands for a positive constant depending on what are enclosed in the brackets. It maybe vary in different contexts.

The  main results of this paper are included in the following theorem:
\begin{theorem}\label{uniquecontinuation}
Let $T>0$ and $\Omega\subset\mathbb{R}^{2}$ be a bounded and simply connected domain with  a $C^3$ boundary $\partial\Omega$.
 Let $\omega\subset\Omega$ be a nonempty open subset.
Then,

\noindent$(i)$ There are $N=N(\Omega,\omega)$ and $\alpha=\alpha(\Omega,\omega)$, with $\alpha\in(0,1)$, such that when $0\leq t_1<t_2\leq T$ and $\mathbf{u}_0\in L^2_{\sigma}(\Omega)$,
 the solution $\mathbf{u}$ to Equations \eqref{stokes}, with the initial condition $\mathbf{u}(0)=\mathbf{u}_0$, verifies
\begin{equation}\label{c*}
\|\mathbf{u}(t_2)\|_{L^2(\Omega)}\leq\left
(Ne^{\frac{N}{t_2-t_1}}\|\mathbf{u}(t_2)\|_{L^2(\omega)}
\right)^{\alpha}\|\mathbf{u}(t_1)\|
^{1-\alpha}_{L^2(\Omega)}.
\end{equation}
\noindent $(ii)$
For each subset $E\subset(0,T)$  of positive measure, there exists $N=N(\Omega,\omega,E,T)$ such that when
$\mathbf{u}_0\in L^2_{\sigma}(\Omega)$,
 the solution $\mathbf{u}$ to Equations \eqref{stokes}, with the initial condition $\mathbf{u}(0)=\mathbf{u}_0$, satisfies
\begin{equation}\label{c88}
  \|\mathbf{u}(T)\|_{L^2(\Omega)}\leq N\int^T_{0}\chi_{E}\|\mathbf{u}(t)\|
  _{L^2(\omega)}\,dt.
\end{equation}
\end{theorem}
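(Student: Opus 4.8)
The plan is to reduce the vector system \eqref{stokes} to a scalar heat equation for the vorticity and then import the quantitative unique continuation theory for parabolic equations from the cited works. Write $w:=\rot\mathbf{u}=\partial_1 u_2-\partial_2 u_1$. Taking $\rot$ of the momentum equation annihilates the pressure gradient, while the boundary condition $\rot\mathbf{u}=0$ turns into a homogeneous Dirichlet condition, so that
\begin{equation*}
w_t-\Delta w=0\ \text{ in }\ \Omega\times(0,T),\qquad w=0\ \text{ on }\ \partial\Omega\times(0,T).
\end{equation*}
Before exploiting this, I would set up a two–way dictionary between $\mathbf{u}$ and $w$. Because $\Omega$ is simply connected and $\nabla\cdot\mathbf{u}=0$, $\mathbf{u}\cdot\mathbf{n}=0$ on $\partial\Omega$, there is a stream function $\psi$ with $\mathbf{u}=\nabla^\perp\psi$, $\psi|_{\partial\Omega}=0$ and $\Delta\psi=\pm w$. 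Testing this identity against $\psi$ together with Poincaré's inequality gives the clean bound $\|\mathbf{u}\|_{L^2(\Omega)}\leq N\|w\|_{H^{-1}(\Omega)}\leq N\|w\|_{L^2(\Omega)}$; this is exactly where simple connectedness enters.

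Next I would apply the parabolic interpolation inequality of \cite{MR2652187, unpublished1, wangzhang} to $w$ on $[s,t_2]$, with $s=(t_1+t_2)/2$ and an open set $\omega_0\Subset\omega$:
\begin{equation*}
\|w(t_2)\|_{L^2(\Omega)}\leq\left(Ne^{N/(t_2-s)}\|w(t_2)\|_{L^2(\omega_0)}\right)^{\alpha}\|w(s)\|^{1-\alpha}_{L^2(\Omega)}.
\end{equation*}
The left side dominates $\|\mathbf{u}(t_2)\|_{L^2(\Omega)}$ by the dictionary. For the global factor $\|w(s)\|_{L^2(\Omega)}=\|\rot\,\mathbf{u}(s)\|_{L^2(\Omega)}$ I would use the smoothing of the (analytic) Stokes semigroup, $\|\rot\,\mathbf{u}(s)\|_{L^2(\Omega)}\leq N(s-t_1)^{-1/2}\|\mathbf{u}(t_1)\|_{L^2(\Omega)}$; since $s-t_1=t_2-s=(t_2-t_1)/2$, the resulting power of $(t_2-t_1)^{-1/2}$ is polynomial and can be absorbed into $e^{N/(t_2-t_1)}$ after regrouping the factors, producing the exact form \eqref{c*}.

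The main obstacle is the local factor: I must bound $\|w(t_2)\|_{L^2(\omega_0)}=\|\rot\,\mathbf{u}(t_2)\|_{L^2(\omega_0)}$ by $\|\mathbf{u}(t_2)\|_{L^2(\omega)}$. This is delicate precisely because $\rot$ costs one derivative, and a pointwise–in–time ``gradient by function'' bound is false for a fixed function, so one must use the equation. My plan is to combine interior parabolic regularity for the heat operator with the mapping property $\rot:L^2\to H^{-1}$: on nested cylinders $\omega_0\Subset\omega_1\Subset\omega$ over $(t_2-\sigma,t_2)$, local smoothing yields $\|w(t_2)\|_{L^2(\omega_0)}\leq N\|w\|_{L^2((t_2-\sigma,t_2);H^{-1}(\omega_1))}\leq N\|\mathbf{u}\|_{L^2((t_2-\sigma,t_2)\times\omega_1)}$, after which a short backward–in–time argument must reduce this space–time norm to the single–time quantity $\|\mathbf{u}(t_2)\|_{L^2(\omega)}$. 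Carrying out this last reduction cleanly, while retaining constants of the form $Ne^{N/(t_2-t_1)}$, is the genuine crux of part $(i)$.

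Finally, part $(ii)$ I regard as a black–box consequence of $(i)$: the interpolation inequality \eqref{c*}, together with its $e^{N/(t_2-t_1)}$ blow–up rate as $t_2\to t_1$, is exactly the hypothesis required by the telescoping/measurable–set–in–time method developed in \cite{unpublished1}. Applying that machinery at the Lebesgue density points of $E$ and summing the resulting geometric series converts \eqref{c*} into the observability estimate \eqref{c88}; I expect this final step to be essentially routine once part $(i)$ is established.
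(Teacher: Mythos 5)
Your overall architecture coincides with the paper's: pass to the vorticity $w=\rot\mathbf{u}$ (equivalently $-\Delta\psi$ for the stream function $\psi$), which solves the Dirichlet heat equation; use simple connectedness to get $\psi\in H^1_0(\Omega)$ and the dictionary $\|\mathbf{u}(t)\|_{L^2(O)}=\|\nabla\psi(t)\|_{L^2(O)}$; apply the quantitative interpolation inequality of \cite{wangzhang} to $w$; absorb polynomial smoothing factors into $e^{N/(t_2-t_1)}$; and obtain $(ii)$ from $(i)$ by the measurable-in-time telescoping strategy of \cite{unpublished1} (the paper indeed treats this last step as a black box). However, the step you yourself flag as the crux --- bounding the local vorticity $\|w(t_2)\|_{L^2(\omega_0)}$ by the local velocity $\|\mathbf{u}(t_2)\|_{L^2(\omega)}$ --- is a genuine gap, and the route you sketch does not close. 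Interior parabolic regularity does give $\|w(t_2)\|_{L^2(\omega_0)}\leq N(\sigma)\,\|\mathbf{u}\|_{L^2((t_2-\sigma,t_2)\times\omega_1)}$, but there is no ``short backward-in-time argument'' reducing this space-time norm to the single-time quantity $\|\mathbf{u}(t_2)\|_{L^2(\omega)}$: for the heat flow, smallness of a local norm at the terminal time does not control local norms at earlier times (that is itself a quantitative backward unique continuation problem, at least as hard as the theorem), and any attempt to estimate $\|\mathbf{u}(t)\|_{L^2(\omega_1)}$ for $t<t_2$ via the interpolation inequality reintroduces observation terms $\|\mathbf{u}(t)\|_{L^2(\omega)}$ at times $t\neq t_2$, so the argument becomes circular.

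The paper's device, which is what your plan is missing, stays entirely at the single time $t_2$ and trades the extra derivative by a Sobolev interpolation on the ball $B_r\subset\omega$: since $|\Delta\psi|\leq|\nabla(\nabla\psi)|$ pointwise, one has $\|\Delta\psi(t_2)\|^2_{L^2(B_r)}\leq\|\nabla\psi(t_2)\|^2_{H^1(B_r)}\leq N\|\nabla\psi(t_2)\|_{H^2(B_r)}\,\|\nabla\psi(t_2)\|_{L^2(B_r)}$, and the $H^2(B_r)$ factor is then estimated \emph{globally}, not locally: $\|\nabla\psi(t_2)\|_{H^2(B_r)}\leq\|\psi(t_2)\|_{H^3(\Omega)}\leq N\|\nabla\Delta\psi(t_2)\|_{L^2(\Omega)}\leq N(t_2-t_1)^{-1}\|\nabla\psi(t_1)\|_{L^2(\Omega)}$, using elliptic regularity (valid because $\psi=\Delta\psi=0$ on $\partial\Omega$) and eigenfunction-expansion smoothing of the heat semigroup; the term $\|\Delta\psi(t_3)\|_{L^2(\Omega)}$ from the heat-equation inequality is handled similarly via $\|\Delta\psi(t_3)\|^2\leq\|\nabla\Delta\psi(t_3)\|\,\|\nabla\psi(t_3)\|$. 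Substituting these into the interpolation inequality for $\Delta\psi$ and rebalancing exponents (the local term ends up with power $\tfrac{1-\alpha}{2}$, which is then renamed $\alpha$) yields \eqref{threeball}, hence \eqref{c*}. So the correct fix for your crux is this fixed-time ``interpolate locally, pay globally'' trick, at the harmless price of a worse exponent and extra factors of $\|\mathbf{u}(t_1)\|_{L^2(\Omega)}$; with it, the remainder of your outline goes through as written.
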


Our strategy to prove the estimate \eqref{c*} is as follows: $(a)$ We transform Equations \eqref{stokes} into  one of parabolic
type  via the method of stream function; $(b)$ We
apply the estimate established in \cite{wangzhang} (see also \cite{MR2652187} and \cite{unpublished1} for the case where $\Omega$ is convex), together with a type of the Sobolev interpolation inequality and some properties of heat equations,
to get a unique continuation estimate
for the stream functions; $(c)$ We  pull  the unique continuation estimate for
stream functions back to the desired  estimate \eqref{c*}.

Three remarks are in order:

\begin{itemize}
\item  The estimate \eqref{c*} is not a trivial consequence of the corresponding unique continuation estimate
for heat equations built up in \cite{wangzhang} (see also \cite{MR2652187} and \cite{unpublished1}).

\item In Step $(c)$, it will be used that  $\Omega$ is simply connected.

\item Based on the estimate \eqref{c*}, the observability estimate \eqref{c88} follows from the new strategy developed in \cite{unpublished1}  at once.
The estimate \eqref{c88} leads to the null-controllability of
Equations \eqref{stokes} with controls restricted over $\omega\times E$.  Since $E$ is a measurable subset in time, such null-controllability for Stokes equations  seems to be new.
\end{itemize}

The rest of this paper is organized as follows: Section 2
presents some preliminaries; Section 3 proves Theorem~\ref{uniquecontinuation}; Section 4  provides  some applications of Theorem~\ref{uniquecontinuation} in the control theory of Stokes equations and Section 5, i.e.,  Appendix contains the proof of some elementary results used in this study.

\section{Some Preliminaries}
\setcounter{equation}{0}
$\;\;\;\;$This section is devoted to review some classical results on the decomposition of two-dimensional vector fields and to prove the well-posedness of Equations \eqref{stokes}. A comprehensive discussion of the decomposition of 2-D vector fields can be found in  \cite[Appendix I, pp. 458-469]{TemamNavier} or \cite[pp. 18-56]{femstokes} .

For each $\psi\in H^1(\Omega)$ and each $\mathbf{u}=(u_1,u_2)\in H^1(\Omega)$,  define
\begin{equation*}\label{rotdefinition}
  \curl\psi=(\p_2\psi,-\p_1\psi),\;\;
  \rot\mathbf{u}=\p_1u_2-\p_2u_1 .
\end{equation*}
It can be easily verified that
\begin{equation}\label{rotformula}
\begin{split}
&\curl\rot\mathbf{u}=-\Delta\mathbf{u},\;\;\text{when}\;\;\mathbf{u}\in H^1_\sigma(\Omega)\cap H^2(\Omega);\\
&\curl\psi\in L^2_{\sigma}(\Omega),\;\;\rot\curl\psi=-\Delta\psi,\;\;
\text{when}\;\;\psi\in H^1_0(\Omega)\cap H^2(\Omega).\\
\end{split}
\end{equation}
The following lemma gives a kind of  Green formula connected with  operators  $\rot$ and $\curl$:
\begin{lemma}\label{lemmacan2}
For any $\mathbf{u}\in H^2(\Omega)$ and $\mathbf{v}\in H^1(\Omega)$,
\begin{equation*}\label{green}
  \int_{\Omega}(\curl\rot\mathbf{u})\cdot\mathbf{v}\,dx=
  -\int_{\p\Omega}(\rot\mathbf{u})(\mathbf{v}\cdot\tau )\, ds+\int_{\Omega}(\rot\mathbf{u})(\rot\mathbf{v})\,dx.
\end{equation*}
\end{lemma}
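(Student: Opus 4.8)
The plan is to reduce the identity to a pair of scalar integrations by parts. Set $w:=\rot\mathbf{u}=\p_1 u_2-\p_2 u_1$, which lies in $H^1(\Omega)$ because $\mathbf{u}\in H^2(\Omega)$. By the definition of $\curl$ one has $\curl\rot\mathbf{u}=\curl w=(\p_2 w,-\p_1 w)$, so for $\mathbf{v}=(v_1,v_2)\in H^1(\Omega)$ the integrand on the left-hand side expands as $(\curl\rot\mathbf{u})\cdot\mathbf{v}=(\p_2 w)v_1-(\p_1 w)v_2$. The whole statement then follows by integrating each of these two scalar products by parts and collecting the interior and boundary contributions.

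First I would apply the standard Green formula to each term; this is legitimate at the regularity $w\in H^1(\Omega)$, $v_i\in H^1(\Omega)$ by the density of smooth functions in $H^1$ together with the trace theorem. This yields $\int_{\Omega}(\p_2 w)v_1\,dx=\int_{\p\Omega}w\,v_1 n_2\,ds-\int_{\Omega}w\,\p_2 v_1\,dx$ and $-\int_{\Omega}(\p_1 w)v_2\,dx=-\int_{\p\Omega}w\,v_2 n_1\,ds+\int_{\Omega}w\,\p_1 v_2\,dx$. Adding the two identities gives $\int_{\Omega}(\curl\rot\mathbf{u})\cdot\mathbf{v}\,dx=\int_{\p\Omega}w\,(v_1 n_2-v_2 n_1)\,ds+\int_{\Omega}w\,(\p_1 v_2-\p_2 v_1)\,dx$. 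The interior integrand is exactly $w\,\rot\mathbf{v}=(\rot\mathbf{u})(\rot\mathbf{v})$, which is precisely the second term on the right-hand side of the claimed formula.

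It then remains to recognize the boundary integrand $v_1 n_2-v_2 n_1$. Here I use the orientation convention fixed in the introduction: since $(\mathbf{n},\tau)$ is positively oriented and $\mathbf{n}=(n_1,n_2)$, the unit tangent is the counterclockwise quarter-turn of $\mathbf{n}$, i.e. $\tau=(-n_2,n_1)$. Consequently $\mathbf{v}\cdot\tau=-v_1 n_2+v_2 n_1=-(v_1 n_2-v_2 n_1)$, so the boundary term becomes $-\int_{\p\Omega}(\rot\mathbf{u})(\mathbf{v}\cdot\tau)\,ds$, matching the first term on the right-hand side and completing the argument.

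I expect the only genuinely delicate point to be the correct bookkeeping of the orientation sign, namely the identification $\tau=(-n_2,n_1)$ and hence $v_1 n_2-v_2 n_1=-\mathbf{v}\cdot\tau$; an error there would flip the sign of the boundary term. The integration by parts itself is routine once the $H^2\times H^1$ regularity is accommodated by the usual density and trace argument, and no structural assumption on $\Omega$ beyond the $C^3$ boundary (ensuring a well-defined unit normal and the validity of Green's formula) is needed.
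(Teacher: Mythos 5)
Your proposal is correct and follows essentially the same route as the paper: set $w=\rot\mathbf{u}$, expand $(\curl w)\cdot\mathbf{v}=(\p_2 w)v_1-(\p_1 w)v_2$, integrate each term by parts, and identify $v_1n_2-v_2n_1=-\mathbf{v}\cdot\tau$ via the orientation convention $\tau=(-n_2,n_1)$. If anything, your write-up is more careful than the paper's, which leaves the density/trace justification and the tangent-vector sign convention implicit (and contains a small typo in the intermediate integrand).
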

\begin{proof}
Set $\phi=\rot\mathbf{u}$. From the standard Green formula,
\begin{equation*}
\begin{split}
   \int_{\Omega}(\curl\phi)\cdot\mathbf{v}\,dx&=\int_{\Omega}(\p_2\phi
    v_1-\p_1\phi v_2)\,dx\\
    &=\int_{\p\Omega}\phi(v_1n_2-v_2n_1)\,ds-\int_{\Omega}
    \phi(\p_2v_1-\p_1v_2n_1)\,dx\\
    &=-\int_{\p\Omega}\phi(\mathbf{v}\cdot\tau)\,ds+
    \int_{\Omega}\phi\rot\mathbf{v}\,dx,
\end{split}
\end{equation*}
which leads to the desired equality.
\end{proof}

The next lemma concerns with the well-poseness of the parabolic-type equation satisfied by  stream functions. Such well-posedness for a similar equation as Equation (\ref{stream}) in the next lemma
was built up in \cite[Th\'{e}or\`{e}me 6.10, pp. 88]{quelques}.  For the sake of completion,  we give its proof in Appendix.

\begin{lemma}\label{biharmoniclemma}
Let $\Omega\subset\mathbb{R}^{2}$ be a bounded domain with a $C^3$ boundary $\partial\Omega$.
Then, for each $\psi_0\in H^1_0(\Omega)$,  there exists a unique  solution $\psi$, with
\begin{equation}\label{regular}
\begin{split}
&\psi\in C([0,T]; H^1_0(\Omega))\cap C^1((0,T]; H^1_0(\Omega))\cap C((0,T]; H^3(\Omega))\\ &\;\;\text{and}\; \;\Delta\psi\in C((0,T]; H^1_0(\Omega)),
\end{split}
\end{equation}
to the equation
 \begin{equation} \label{stream}
  \begin{cases}
\Delta\psi_t-\Delta^2\psi=0\;\;\;\;&\text{in}\;\;  \Omega\times(0,T),\\
\Delta\psi=0,\;\;\psi=0\;\;\;\;\;&\text{on}\;\;\partial\Omega\times(0,T),\\
\psi(\cdot,0)=\psi_0(\cdot)\;\;\;\;&\text{in}\;\;\Omega.
   \end{cases}
  \end{equation}
\end{lemma}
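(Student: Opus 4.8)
The plan is to exploit the factorization $\Delta\partial_t-\Delta^2=\Delta(\partial_t-\Delta)$ together with the two boundary conditions $\psi=0$ and $\Delta\psi=0$ on $\partial\Omega$, which are precisely what is needed to ``invert'' the outer Laplacian and collapse \eqref{stream} to the classical heat equation $\psi_t=\Delta\psi$ with the homogeneous Dirichlet condition. Let $A=-\Delta$ denote the Dirichlet Laplacian on $\Omega$, with domain $D(A)=H^2(\Omega)\cap H^1_0(\Omega)$. Since $\partial\Omega$ is $C^3$, elliptic regularity identifies the fractional-power domains with Sobolev spaces carrying the correct traces: $D(A^{1/2})=H^1_0(\Omega)$, and $D(A^{3/2})$ consists of those $\psi\in H^3(\Omega)$ with $\psi=\Delta\psi=0$ on $\partial\Omega$. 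As $-A$ generates an analytic semigroup $\{e^{t\Delta}\}_{t\geq0}$ on $L^2(\Omega)$, I would simply define the candidate solution by $\psi(t)=e^{t\Delta}\psi_0$.

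For existence, observe that $\psi(t)=e^{t\Delta}\psi_0$ already solves $\psi_t=\Delta\psi$, hence $\Delta\psi_t=\Delta(\Delta\psi)=\Delta^2\psi$, so the interior equation of \eqref{stream} holds; the Dirichlet condition $\psi=0$ is built into the semigroup, while for $t>0$ one has $\Delta\psi=\psi_t=-A e^{t\Delta}\psi_0\in D(A^{1/2})=H^1_0(\Omega)$, which delivers the second boundary condition $\Delta\psi=0$. The regularity \eqref{regular} is then read off from the smoothing of the analytic semigroup. Because $\psi_0\in H^1_0(\Omega)=D(A^{1/2})$, the map $t\mapsto A^{1/2}e^{t\Delta}\psi_0$ is continuous on $[0,T]$, giving $\psi\in C([0,T];H^1_0(\Omega))$ up to $t=0$. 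For $t>0$, $e^{t\Delta}\psi_0\in D(A^{k})$ for every $k$, so $\psi_t=-Ae^{t\Delta}\psi_0\in D(A^{1/2})=H^1_0(\Omega)$ depends analytically on $t$, yielding $C^1((0,T];H^1_0(\Omega))$; moreover $\psi(t)\in D(A^{3/2})\hookrightarrow H^3(\Omega)$ gives $C((0,T];H^3(\Omega))$, and $\Delta\psi=-A\psi\in D(A^{1/2})=H^1_0(\Omega)$ gives $\Delta\psi\in C((0,T];H^1_0(\Omega))$.

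Uniqueness runs the reduction backward. Given any solution $\psi$ in the class \eqref{regular}, set $g=\psi_t-\Delta\psi$ for $t>0$; the equation forces $\Delta g=0$. By \eqref{regular} both $\psi_t$ (using $\psi|_{\partial\Omega}=0$ for all $t$, so $\psi_t$ has zero trace) and $\Delta\psi$ lie in $H^1_0(\Omega)$, hence so does $g$, and testing the weak harmonicity identity $\int_\Omega\nabla g\cdot\nabla v\,dx=0$ against $v=g$ gives $g\equiv0$. Thus every solution satisfies $\psi_t=\Delta\psi$ with $\psi(0)=\psi_0$, and the energy estimate $\tfrac{d}{dt}\|\psi\|_{L^2(\Omega)}^2=-2\|\nabla\psi\|_{L^2(\Omega)}^2$ forces uniqueness. (Alternatively, one may follow the Galerkin scheme of \cite{quelques} using the eigenbasis of $A$, where the explicit modal solution $c_k(t)=e^{-\lambda_k t}c_k(0)$ exhibits the same heat-semigroup structure.)

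I expect the main obstacle to be not the algebra of the reduction but its rigorous justification together with the identification of the fractional-power domains $D(A^{1/2})$ and $D(A^{3/2})$ with the Sobolev spaces bearing the right boundary traces: this is exactly where the $C^3$ hypothesis on $\partial\Omega$ is indispensable, and where care is required near $t=0$, since $\psi_0\in H^1_0(\Omega)$ only (so $\Delta\psi_0\in H^{-1}(\Omega)$ and $\psi(t)$ enters $H^3(\Omega)$ only for $t>0$), which accounts for the split between the $C([0,T];\cdot)$ and the $C((0,T];\cdot)$ statements in \eqref{regular}.
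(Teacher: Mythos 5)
Your proof is correct, and while it rests on the same underlying observation as the paper --- that the boundary conditions $\psi=\Delta\psi=0$ make \eqref{stream} the heat equation in disguise --- it implements the reduction in the opposite direction. The paper differentiates: it sets $\phi=\Delta\psi$, solves the Dirichlet heat equation with the rough datum $\Delta\psi_0\in H^{-1}(\Omega)$ to get $\phi\in C([0,T];H^{-1}(\Omega))\cap C^1((0,T];H^{-1}(\Omega))\cap C((0,T];H^1_0(\Omega))$, and then transfers each continuity statement in \eqref{regular} down to $\psi$ via the elliptic estimate $\|\psi(t_1)-\psi(t_2)\|_{H^1_0(\Omega)}\leq N(\Omega)\|\phi(t_1)-\phi(t_2)\|_{H^{-1}(\Omega)}$ and its higher-order analogues, dismissing uniqueness as obvious (since $\phi$ is unique and $\psi$ is recovered by inverting the Dirichlet Laplacian). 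You integrate instead: you work with $\psi$ itself, define $\psi(t)=e^{t\Delta}\psi_0$, and read off \eqref{regular} from analytic smoothing together with $D(A^{1/2})=H^1_0(\Omega)$ and $D(A^{3/2})\hookrightarrow H^3(\Omega)$; the $C^3$ boundary enters for you exactly where it enters for the paper, namely in $H^3$ elliptic regularity. The two constructions yield the same solution, since the inverse Dirichlet Laplacian commutes with the heat flow. Your route buys one genuinely valuable extra: the uniqueness step, where you show $g=\psi_t-\Delta\psi$ lies in $H^1_0(\Omega)$ (precisely what the class \eqref{regular} guarantees) and is distributionally harmonic, hence zero, proves that \emph{every} solution in the class \eqref{regular} solves the Dirichlet heat equation --- a fact the paper asserts without proof and then relies on in Lemma~\ref{phungwang} (``Because $\psi$ verifies the heat equation with zero Dirichlet boundary condition\dots''). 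The paper's route, in exchange, avoids fractional power domains entirely, needing only classical heat theory with $H^{-1}$ data plus elliptic estimates. One small point worth making explicit in your uniqueness argument: the identity $\int_\Omega\nabla g\cdot\nabla v\,dx=0$ holds first for $v\in C^\infty_c(\Omega)$, from $\Delta g=0$ in the distributional sense and $g\in H^1(\Omega)$, and extends to $v=g$ by density precisely because $g\in H^1_0(\Omega)$ --- this is the step where both boundary conditions of \eqref{stream} are actually consumed.
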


\begin{lemma}\label{rottheorem}
  Let $\Omega$ be a bounded and simply connected  domain with a $C^2$ boundary $\partial\Omega$. Then, for each $\mathbf{u}\in L^2_{\sigma}(\Omega)$, there exists a  unique stream function $\psi\in H^1_0(\Omega)$ such that $\curl\psi=\mathbf{u}$.
\end{lemma}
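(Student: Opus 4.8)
The plan is to dispose of uniqueness by a one-line argument and then construct $\psi$ by a variational (Riesz representation) procedure that automatically places it in $H^1_0(\Omega)$, reserving the simple connectedness for a single step that kills a residual field. For uniqueness, suppose $\psi_1,\psi_2\in H^1_0(\Omega)$ both satisfy $\curl\psi_i=\mathbf u$. Then $\curl(\psi_1-\psi_2)=0$; since $\curl=(\partial_2,-\partial_1)$ is a rotated gradient, this forces $\nabla(\psi_1-\psi_2)=0$, so $\psi_1-\psi_2$ is a constant, which must vanish because $\psi_1-\psi_2\in H^1_0(\Omega)$.

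For existence I would first observe that $\rot\mathbf u$ is a well-defined element of $H^{-1}(\Omega)$ through $\langle\rot\mathbf u,\phi\rangle=\int_\Omega\mathbf u\cdot\curl\phi\,dx$ for $\phi\in H^1_0(\Omega)$, with the bound $|\langle\rot\mathbf u,\phi\rangle|\leq\|\mathbf u\|_{L^2(\Omega)}\|\nabla\phi\|_{L^2(\Omega)}$. Since Poincar\'e's inequality makes $\int_\Omega\nabla\psi\cdot\nabla\phi\,dx$ an inner product on $H^1_0(\Omega)$, the Riesz representation theorem yields a unique $\psi\in H^1_0(\Omega)$ with
\[
\int_\Omega\nabla\psi\cdot\nabla\phi\,dx=\int_\Omega\mathbf u\cdot\curl\phi\,dx\qquad\text{for all }\phi\in H^1_0(\Omega).
\]
This $\psi$ is the candidate stream function, and it belongs to $H^1_0(\Omega)$ by construction, so the boundary condition $\psi=0$ is free of charge.

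It remains to verify that $\curl\psi=\mathbf u$. Set $\mathbf w:=\mathbf u-\curl\psi\in L^2(\Omega)$. Using the pointwise identity $\curl\psi\cdot\curl\phi=\nabla\psi\cdot\nabla\phi$, the variational equation gives $\int_\Omega\mathbf w\cdot\curl\phi\,dx=0$ for every $\phi\in H^1_0(\Omega)$, hence $\rot\mathbf w=0$ in the sense of distributions. On the other hand $\mathbf w\in L^2_\sigma(\Omega)$: both $\mathbf u$ and $\curl\psi$ are divergence free with vanishing normal trace, the latter because $\psi\in H^1_0(\Omega)$ makes the tangential derivative $\curl\psi\cdot\mathbf n=\partial_\tau\psi$ vanish on $\partial\Omega$. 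Here the simple connectedness enters: a curl-free $L^2$ field on a simply connected domain is a gradient, so $\mathbf w=\nabla q$ for some $q\in H^1(\Omega)$. Finally, since $\mathbf w\in L^2_\sigma(\Omega)$, an integration by parts shows $\int_\Omega\mathbf w\cdot\nabla\phi\,dx=0$ for all $\phi\in H^1(\Omega)$ (this is precisely the content of being divergence free with zero normal trace), and choosing $\phi=q$ gives $\int_\Omega|\nabla q|^2\,dx=0$. Thus $\mathbf w=\nabla q=0$, i.e.\ $\curl\psi=\mathbf u$.

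I expect the main obstacle to be the implication $\rot\mathbf w=0\Rightarrow\mathbf w=\nabla q$ at merely $L^2$ regularity, which is the $L^2$ version of the de Rham (Poincar\'e) lemma on a simply connected domain and is the only place the topological hypothesis is genuinely needed; on a multiply connected domain the residual field $\mathbf w$ could be a nonzero harmonic field, and the stream function would only be pinned down up to the corresponding constants on the several boundary components. This step, together with the characterization of $L^2_\sigma(\Omega)$ as the orthogonal complement of gradients, may alternatively be quoted directly from the classical decomposition theory in \cite{TemamNavier} and \cite{femstokes}.
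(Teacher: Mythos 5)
Your proof is correct, and while it rests on the same two pillars as the paper's appendix proof --- solving the Dirichlet problem $-\Delta\psi=\rot\mathbf{u}$ and using the triviality of harmonic fields on a simply connected domain --- the execution is genuinely different. The paper approximates $\mathbf{u}$ in $L^2$ by smooth, compactly supported, divergence-free fields $\mathbf{u}_n$, solves the classical problems $-\Delta\psi_n=\rot\mathbf{u}_n$, identifies $\curl\psi_n=\mathbf{u}_n$ for each fixed $n$ via $\ker(\rot)\cap L^2_{\sigma}(\Omega)=\{0\}$, and then passes to the limit using the elliptic stability estimate $\|\psi_n-\psi_m\|_{H^1_0(\Omega)}\leq N(\Omega)\|\mathbf{u}_n-\mathbf{u}_m\|_{L^2(\Omega)}$. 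You instead solve the problem in one shot: $\rot\mathbf{u}$ lands in $H^{-1}(\Omega)$ by duality, Riesz (or Lax--Milgram) produces $\psi\in H^1_0(\Omega)$ directly, and the residual $\mathbf{w}=\mathbf{u}-\curl\psi$ is killed at bare $L^2$ regularity by factoring the paper's key fact into two quotable pieces: the $L^2$ de Rham lemma ($\rot\mathbf{w}=0$ implies $\mathbf{w}=\nabla q$, which is \cite[Theorem 2.9]{femstokes}, the very result the paper invokes to recover the pressure in Proposition~\ref{wellpose}) and the characterization of $L^2_{\sigma}(\Omega)$ as the orthogonal complement of gradients from \cite{TemamNavier}. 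The topological input is identical --- your conclusion $\mathbf{w}=\nabla q$ with $\int_{\Omega}\mathbf{w}\cdot\nabla q\,dx=0$ is exactly $\ker(\rot)\cap L^2_{\sigma}(\Omega)=\{0\}$ unwound --- but your route avoids the density-and-limit argument entirely and, as a bonus, records the easy uniqueness argument, which the paper's appendix omits. What the paper's approximation buys is that all trace computations are performed on smooth $\psi_n$; in your argument the claim $\curl\psi\cdot\mathbf{n}=\partial_{\tau}\psi=0$ for $\psi$ merely in $H^1_0(\Omega)$ deserves one more line: since $\nabla\cdot\curl\psi=0$, the field $\curl\psi$ lies in $H(\mathrm{div};\Omega)$, its normal trace is defined in $H^{-1/2}(\partial\Omega)$, and it vanishes by continuity of that trace map together with density of $C^{\infty}_c(\Omega)$ in $H^1_0(\Omega)$. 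That is a standard one-line repair, not a gap.
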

The proof of  lemma~\ref{rottheorem} is essentially contained in
 \cite[Theorem 3.1, pp. 37-40]{femstokes}, where the curl equation $\curl\psi=\mathbf{u}$ was studied for the case that $\Omega$ is multi-connected. The corresponding  result there reads: for each $\mathbf{u}\in L^2_{\sigma}(\Omega)$, the curl  equation has a solution in $H^1(\Omega)$, which is a constant on each connected component of $\p\Omega$. These constants may be different in different connected components. Hence, it may happen that  any solution of the curl equation is not in  $H^1_0(\Omega)$ in that case.   For the sake of convenience, we provide a proof for Lemma~\ref{rottheorem} in Appendix  of this paper. \\

\begin{prop}\label{wellpose}
Let $\Omega\subset\mathbb{R}^{2}$ be a bounded and simply connected domain with a $C^3$ boundary $\partial\Omega$.
Then, for each  $\mathbf{u}_0\in L^2_{\sigma}(\Omega)$, Equations \eqref{stokes}, with the initial condition $\mathbf{u}(\cdot, 0)=\mathbf{u}_0$, has a unique solution
\begin{equation}\label{regular2}
    \mathbf{u}\in C([0,T];L^2_{\sigma}(\Omega))\cap  C^1((0,T];L^2_{\sigma}(\Omega)) \cap  C((0,T];H^2(\Omega)\cap H^1_{\sigma}(\Omega)),
\end{equation}
  for some $p\in L_{loc}^2(0,T;L^2(\Omega))$. Moreover, $\mathbf{u}\in L^2(0,T;H^1_{\sigma}(\Omega))$ and
\begin{equation}\label{estimatecz}
\max_{s\in[0,T]}\|\mathbf{u}(s)\|^2_{L^2(\Omega)}+
\int^{T}_0\|\mathbf{u}(t)\|^2_{H^1(\Omega)}\,dt
\leq N(\Omega)\|\mathbf{u}_0\|^2_{L^2(\Omega)}.
\end{equation}
\end{prop}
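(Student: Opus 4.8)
The plan is to build the solution through the stream function and the scalar equation \eqref{stream}, exploiting the identities \eqref{rotformula} and the correspondence of Lemma~\ref{rottheorem}. Given $\mathbf{u}_0\in L^2_{\sigma}(\Omega)$, Lemma~\ref{rottheorem} furnishes a unique $\psi_0\in H^1_0(\Omega)$ with $\curl\psi_0=\mathbf{u}_0$, and Lemma~\ref{biharmoniclemma} then produces a unique $\psi$ enjoying the regularity \eqref{regular} and solving \eqref{stream} with datum $\psi_0$. I set $\mathbf{u}:=\curl\psi$ and claim this is the desired solution. The regularity \eqref{regular2} transfers directly: since $\curl$ lowers the Sobolev order by one and $\curl\psi\in L^2_{\sigma}(\Omega)$ whenever $\psi\in H^1_0(\Omega)$ (cf. \eqref{rotformula}), the memberships $\psi\in C([0,T];H^1_0)\cap C^1((0,T];H^1_0)\cap C((0,T];H^3)$ give $\mathbf{u}\in C([0,T];L^2_{\sigma})\cap C^1((0,T];L^2_{\sigma})\cap C((0,T];H^2\cap H^1_{\sigma})$, with $\mathbf{u}_t=\curl\psi_t$ and $\Delta\mathbf{u}=\curl\Delta\psi$ (the last by density from the smooth identity $\Delta\curl\psi=\curl\Delta\psi$).

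The central point is to recover the equation together with a pressure. The divergence-free condition and $\mathbf{u}\cdot\mathbf{n}=0$ hold because $\curl\psi\in L^2_{\sigma}(\Omega)$, while $\rot\mathbf{u}=\rot\curl\psi=-\Delta\psi$ vanishes on $\partial\Omega$ by the boundary condition in \eqref{stream}; this is exactly the Navier slip condition in \eqref{stokes}. For the momentum balance I compute, using $\Delta\mathbf{u}=\curl\Delta\psi$,
\begin{equation*}
\mathbf{u}_t-\Delta\mathbf{u}=\curl\psi_t-\curl\Delta\psi=\curl(\psi_t-\Delta\psi).
\end{equation*}
Applying $\rot$ and invoking \eqref{rotformula} together with $\Delta\psi_t-\Delta^2\psi=0$ yields $\rot(\mathbf{u}_t-\Delta\mathbf{u})=-\Delta\psi_t+\Delta^2\psi=0$ for $t\in(0,T]$. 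Thus $\mathbf{u}_t-\Delta\mathbf{u}$ is an $L^2$ field with vanishing $\rot$; because $\Omega$ is simply connected, such a field is a gradient, so there exists $p(\cdot,t)\in H^1(\Omega)$, unique up to an additive constant, with $\mathbf{u}_t-\Delta\mathbf{u}=\nabla p$. Normalizing $p$ to have zero average and using $\mathbf{u}_t-\Delta\mathbf{u}\in C((0,T];L^2)$ gives $p\in L^2_{loc}(0,T;L^2(\Omega))$. This is the one step where the topology of $\Omega$ is essential, and I expect it to be the main obstacle: one must confirm that \eqref{stream} is precisely the vorticity form of \eqref{stokes} and that simple connectivity converts the $\rot$-free residual into a genuine pressure.

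It remains to establish \eqref{estimatecz} and uniqueness. Testing the momentum equation against $\mathbf{u}$, the pressure term drops out since $\int_{\Omega}\nabla p\cdot\mathbf{u}=-\int_{\Omega}p\,\nabla\cdot\mathbf{u}+\int_{\partial\Omega}p\,\mathbf{u}\cdot\mathbf{n}\,ds=0$, while Lemma~\ref{lemmacan2} with $\mathbf{v}=\mathbf{u}$ and $\Delta\mathbf{u}=-\curl\rot\mathbf{u}$ gives $-\int_{\Omega}\Delta\mathbf{u}\cdot\mathbf{u}=\|\rot\mathbf{u}\|^2_{L^2(\Omega)}$, the boundary contribution vanishing because $\rot\mathbf{u}=0$ on $\partial\Omega$. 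Hence
\begin{equation*}
\tfrac12\tfrac{d}{dt}\|\mathbf{u}(t)\|^2_{L^2(\Omega)}+\|\rot\mathbf{u}(t)\|^2_{L^2(\Omega)}=0,
\end{equation*}
so that $\max_{s\in[0,T]}\|\mathbf{u}(s)\|^2_{L^2}\leq\|\mathbf{u}_0\|^2_{L^2}$ and $\int_0^T\|\rot\mathbf{u}\|^2_{L^2}\,dt\leq\tfrac12\|\mathbf{u}_0\|^2_{L^2}$. Writing $\mathbf{u}=\curl\psi$ with $\psi\in H^1_0(\Omega)$, the Poincar\'e inequality gives $\|\mathbf{u}\|_{L^2}=\|\nabla\psi\|_{L^2}\leq N(\Omega)\|\Delta\psi\|_{L^2}=N(\Omega)\|\rot\mathbf{u}\|_{L^2}$, and the standard div--curl estimate for fields in $L^2_{\sigma}(\Omega)$ yields $\|\mathbf{u}\|_{H^1}\leq N(\Omega)\bigl(\|\rot\mathbf{u}\|_{L^2}+\|\mathbf{u}\|_{L^2}\bigr)$; combining these with the two energy bounds produces \eqref{estimatecz} with a constant $N(\Omega)$ independent of $T$. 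Finally, uniqueness follows by applying the same energy identity to the difference of two solutions sharing the initial datum: its $L^2$ norm is bounded by its (zero) initial norm, so the difference vanishes identically.
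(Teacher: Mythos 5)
Your proposal is correct and follows essentially the same route as the paper's proof: construct $\mathbf{u}=\curl\psi$ via Lemmas~\ref{rottheorem} and~\ref{biharmoniclemma}, recover the pressure from $\rot(\mathbf{u}_t-\Delta\mathbf{u})=0$ and simple connectedness, and derive \eqref{estimatecz} from the energy identity of Lemma~\ref{lemmacan2} together with the $H^1$ bound by $\|\rot\mathbf{u}\|_{L^2}$. The only cosmetic differences are that you assemble that last bound from the Poincar\'e inequality for the stream function plus a div--curl estimate, where the paper cites the decomposition theorem directly, and that the paper is slightly more careful in integrating the energy identity from $\varepsilon>0$ and letting $\varepsilon\to0$, since the regularity \eqref{regular2} only holds on $(0,T]$.
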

\begin{proof}
   According to Lemma $\ref{rottheorem}$, there is a unique $\psi_0\in H^1_0(\Omega)$ such that $\curl \psi_0=\mathbf{u}_0$. Then, by Lemma $\ref{biharmoniclemma}$,  Equation \eqref{stream}, with the aforementioned $\psi_0$, has a unique solution $\psi$ verifying \eqref{regular}.
We claim that the vector field $\mathbf{u}:=\curl\psi$ satisfies Equations \eqref{stokes}, as well as \eqref{regular2}. In fact,
it can be checked readily that $\nabla\cdot\mathbf{u}=\nabla\cdot\curl\psi=0$ and that
\begin{equation*}
\begin{split}
&\rot\mathbf{u}=\rot\curl\psi=-\Delta\psi=0,\;\;\text{on}\;\;\p\Omega
\times(0,T);\\
&\mathbf{u}\cdot\mathbf{n}=\curl\psi\cdot\mathbf{n}=\frac{\p\psi}
 {\p\mathbf{\tau}}=0,\;\;\text{on}\;\;\p\Omega\times(0,T).
\end{split}
\end{equation*}
Also, by Equation \eqref{stream},
\begin{equation*}
  \rot(\mathbf{u}_t-\Delta \mathbf{u})
    =\rot(\curl\psi_t-\Delta\curl\psi)=0.
\end{equation*}
Because $\Omega$ is simply connected,  for a.e. $t\in(0,T)$, there exists a unique function $p(t)\in H^1(\Omega)$ up to a constant such that (see for instance \cite[Theorem 2.9, pp. 31]{femstokes})
\begin{equation*}
  \mathbf{u}_t-\Delta \mathbf{u}=\nabla p.
\end{equation*}
From the Poincar\'{e} inequality, it follows that there exists
$p\in L_{loc}^2(0,T;L^2(\Omega))$ provided that $\int_{\Omega}p(t,x)\,dx=0$ for a.e. $t\in(0,T)$.

To justify  \eqref{estimatecz},  we  multiply the first equation of \eqref{stokes} by $\mathbf{u}$, and then integrate it from $\varepsilon>0$ (sufficiently small) to $s\in (0,T]$. Now,  Lemma~\ref{lemmacan2}  leads to
\begin{equation}\label{2.1}
\begin{split}
&\|\mathbf{u}(s)\|^2_{L^2(\Omega)}+2\int^{s}_\epsilon
\|\rot\mathbf{u}(t)\|^2_{L^2(\Omega)}\,dt
=\|\mathbf{u}(\epsilon)\|^2_{L^2(\Omega)}.
\end{split}
\end{equation}
Sending $\epsilon\rightarrow 0$ in \eqref{2.1}, we see that
\begin{equation*}\label{estimate}
\|\mathbf{u}(s)\|^2_{L^2(\Omega)}+
2\int^{s}_0\|\rot\mathbf{u}(t)\|^2_{L^2(\Omega)}\,dt
=\|\mathbf{u}_0\|^2_{L^2(\Omega)}.
\end{equation*}
This, together with the simply connectedness of $\Omega$ and the decomposition theorem (see, e.g., \cite[Remark 3.5, pp.\,45]{femstokes} or \cite[Lemma 1.6, pp.\,465]{TemamNavier}), indicates the estimate \eqref{estimatecz}, from which, the uniqueness follows at once.
\end{proof}

\section{Unique Continuation Estimates}
\setcounter{equation}{0}
$\;\;\;\;$This section is devoted to prove Theorem~\ref{uniquecontinuation}.
We first establish an estimate for the gradient of the stream function, and then present the proof of estimate (\ref{c*})
and (\ref{c88}), where
the simply connectedness of the domain  is used. In what follows, we will denote by $\psi$  a  solution to the equation:
\begin{equation} \label{streamWGS}
  \begin{cases}
\Delta\psi_t-\Delta^2\psi=0\;\;\;\;&\text{in}\;\;  \Omega\times(0,T),\\
\Delta\psi=0,\;\;\psi=0\;\;\;\;\;&\text{on}\;\;\partial\Omega\times(0,T),\\
\psi(0)\in H^1_0(\Omega).
   \end{cases}
  \end{equation}

\begin{lemma}\label{prop1}For any $0\leq s<t\leq T$,
$\|\nabla\psi(t)\|_{L^2(\Omega)}\leq\|\nabla\psi(s)\|_{L^2(\Omega)}$. Moreover, $\nabla\psi(t)=0$ for all $t\in[0,T]$ whenever $\nabla\psi(T)=0$.
\end{lemma}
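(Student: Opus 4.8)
The plan is to first recognize that, despite its fourth-order appearance, Equation \eqref{streamWGS} forces $\psi$ to solve a genuine Dirichlet heat equation, and then to read off both claims. Set $w := \psi_t - \Delta\psi$. The first line of \eqref{streamWGS} reads $\Delta w = \Delta\psi_t - \Delta^2\psi = 0$, so $w$ is harmonic. For the boundary behaviour, $\psi = 0$ on $\partial\Omega$ for all $t$ gives $\psi_t = 0$ on $\partial\Omega$, while $\Delta\psi = 0$ on $\partial\Omega$ is imposed directly, so $w = 0$ on $\partial\Omega$. Indeed, the regularity \eqref{regular} gives $\psi_t \in C((0,T];H^1_0(\Omega))$ and $\Delta\psi \in C((0,T];H^1_0(\Omega))$, whence $w \in H^1_0(\Omega)$ is weakly harmonic; testing against $w$ itself yields $\|\nabla w\|_{L^2(\Omega)} = 0$, so $w \equiv 0$. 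Thus
\begin{equation*}
\psi_t = \Delta\psi \quad \text{in}\;\; \Omega \times (0,T], \qquad \psi = 0 \;\;\text{on}\;\; \partial\Omega .
\end{equation*}

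For the monotonicity I would run the elementary energy identity for this heat equation. Differentiating and integrating by parts, the boundary term drops because $\psi_t$ vanishes on $\partial\Omega$, and the last equality below uses $\Delta\psi = \psi_t$:
\begin{equation*}
\frac{1}{2}\frac{d}{dt}\|\nabla\psi(t)\|^2_{L^2(\Omega)} = \int_\Omega \nabla\psi\cdot\nabla\psi_t\,dx = -\int_\Omega (\Delta\psi)\,\psi_t\,dx = -\|\psi_t(t)\|^2_{L^2(\Omega)} \leq 0 .
\end{equation*}
Hence $t \mapsto \|\nabla\psi(t)\|_{L^2(\Omega)}$ is non-increasing on $(0,T]$, and by the continuity in \eqref{regular} also on $[0,T]$, which is the first assertion.

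The substantive point is the second assertion, a backward-uniqueness statement, which is where I expect the genuine care to be needed. First, since $\psi(T)\in H^1_0(\Omega)$ and $\Omega$ is connected, $\nabla\psi(T)=0$ forces $\psi(T)$ to be a constant of zero trace, so $\psi(T)=0$. I would then exploit the self-adjoint structure: let $\{e_k\}$ be an $L^2(\Omega)$-orthonormal basis of Dirichlet eigenfunctions, $-\Delta e_k = \lambda_k e_k$ with $\lambda_k > 0$, and write $\psi(0) = \sum_k c_k e_k$. The reduced equation gives $\psi(t) = \sum_k c_k e^{-\lambda_k t} e_k$, so $\psi(T)=0$ yields $c_k e^{-\lambda_k T} = 0$, whence $c_k = 0$ for every $k$ since $e^{-\lambda_k T} \neq 0$. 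Therefore $\psi \equiv 0$ on $[0,T]$, and in particular $\nabla\psi(t)=0$ for all $t$. (Equivalently, one may invoke the log-convexity of $t\mapsto\|\psi(t)\|_{L^2(\Omega)}$, which shows a nonzero solution cannot vanish at $t=T$.) The main obstacle is thus not any single computation but ensuring the reduction to the heat equation is legitimate and then deploying backward uniqueness correctly; once the spectral representation is in hand, both claims become transparent.
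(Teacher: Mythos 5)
Your proof is correct, but it takes a genuinely different route from the paper's. The paper never reduces the fourth-order system to a heat equation inside this lemma: it works directly with $e(t)=\int_\Omega|\nabla\psi(t)|^2\,dx$, computes $\dot e(t)=-2\int_\Omega|\Delta\psi|^2\,dx$ and $\ddot e(t)=4\int_\Omega|\nabla(\Delta\psi)|^2\,dx$ using the boundary conditions $\psi=\Delta\psi=0$ on $\partial\Omega$, and then deduces from Cauchy--Schwarz the log-convexity inequality $\ddot e(t)\,e(t)\geq(\dot e(t))^2$; monotonicity comes from $\dot e\leq 0$, and backward uniqueness from a contradiction argument on an interval $[t_1,t_2]$ with $e>0$ on $[t_1,t_2)$ and $e(t_2)=0$, using convexity of $\log e$. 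Your argument instead observes that $w:=\psi_t-\Delta\psi$ is harmonic with $w\in H^1_0(\Omega)$ (legitimate, since \eqref{regular} gives $\psi_t,\Delta\psi\in C((0,T];H^1_0(\Omega))$), hence $w\equiv 0$, so $\psi$ solves the Dirichlet heat equation, and then both claims follow from the energy identity and spectral backward uniqueness. Your reduction is cleaner and makes the lemma transparent; it is also consonant with what the paper does afterwards, since the proof of Lemma~\ref{phungwang} explicitly treats $\psi$ and $\Delta\psi$ as solutions of the heat equation and uses exactly the eigenfunction expansion you invoke. What the paper's log-convexity route buys is self-containedness: it never needs to identify $\psi$ with the semigroup solution $e^{t\Delta}\psi_0$, a step your spectral expansion implicitly uses and which deserves one sentence of justification (forward uniqueness of the heat flow on $[\epsilon,T]$ plus the continuity $\psi\in C([0,T];H^1_0(\Omega))$ at $t=0$); moreover the energy-level argument generalizes to settings where no such reduction to a scalar heat equation is available. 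Your parenthetical remark about log-convexity of $\|\psi(t)\|_{L^2(\Omega)}$ is in fact closest in spirit to the paper, except the paper applies the convexity argument to the gradient energy $\|\nabla\psi(t)\|^2_{L^2(\Omega)}$ rather than to $\|\psi(t)\|^2_{L^2(\Omega)}$.
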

\begin{proof}
First, set
$$e(t)=\int_{\Omega}|\nabla\psi(t)|^2\,dx,\;\;t\in[0,T].$$
Since $\Delta\psi=0$ and $\psi=0$ on $\partial\Omega\times(0,T)$, we have that for each $t>0$,
  \begin{equation*}
  \begin{split}
    &\dot e(t)=
    \int_{\Omega}2\nabla\psi\cdot\nabla\psi_t\,dx
    =2\int_{\p\Omega}\psi\frac{\p\psi_t}{\p n}\,ds  -2\int_{\Omega}\psi\Delta\psi_t\,dx\\
    &=-2\int_{\Omega}\psi\Delta(\Delta\psi)\,dx
    =-2\int_{\Omega}|\Delta\psi|^2dx-2\int_{\p\Omega}\left(\psi\frac{\p}{\p n}(\Delta\psi)-\Delta \psi\frac{\p\psi}{\p n}\right) \,ds\\
    &=-2\int_{\Omega}|\Delta\psi|^2\,dx.
    \end{split}
  \end{equation*}
Namely, it holds that
\begin{equation}\label{z1}
   \dot e(t)=-2\int_{\Omega}|\Delta\psi|^2\,dx.
\end{equation}

Next,
\begin{equation}\label{z2}
\begin{split}
 & \ddot e(t)
  =-4\int_{\Omega}\Delta\psi\Delta\psi_t \,dx=-4\int_{\Omega}\Delta\psi\Delta(\Delta\psi)\, dx\\
  &=-4\int_{\p\Omega}\Delta\psi\frac{\p}{\p n}(\Delta\psi)\,d\sigma+4\int_{\Omega}\nabla\Delta\psi\cdot\nabla\Delta\psi \,dx=4\int_{\Omega}|\nabla(\Delta\psi)|^2\,dx.
\end{split}
\end{equation}
Integrating by parts and  the Cauchy-Schwartz inequality lead to
 \begin{equation*}
   \int_{\Omega}|\Delta\psi|^2dx=-\int_{\Omega}\nabla(\Delta\psi)\cdot\nabla\psi dx \leq \left(\int_{\Omega}|\nabla\Delta\psi|^2dx\right)^{\frac 12}\left(\int_{\Omega}|\nabla\psi|^2dx\right)^{\frac 12}.
 \end{equation*}
 This, together with \eqref{z1} and \eqref{z2},
shows that
 \begin{equation}\label{convex}
  \ddot e(t)e(t)\geq(\dot e(t))^2,\;\;t\in(0,T).
 \end{equation}
If $e(t)=0$ for all $0\leq t\leq T$, we are done.
  Otherwise there exists a closed interval $[t_1,t_2]\subset [0,T]$ on which
  \begin{equation}\label{z4}
    e(t)>0 \quad \text{for all}\quad t\in[t_1, t_2);\;\;\mbox{and}\;\; \quad e(t_2)=0.
  \end{equation}
  Now, write $h(t):= \log e(t)$, $t_1\leq t< t_2$. Then, it follows from  \eqref{convex} that
  $$h''(t)\geq 0\;\;\mbox{ for all}\;\;t\in (t_1,t_2),
  $$
  \noindent i.e.,
  $h(t)$ is convex on $(t_1,t_2)$. Thus,
    $$h\left(\frac{t_1+t}{2}\right)\leq \frac{h(t_1)+h(t)}{2},\;\;t\in(t_1,t_2).$$
This implies that
\begin{equation*}
 e\left(\frac{t_1+t}{2}\right) \leq (e(t_1))^{\frac 12}(e(t))^{\frac 12},
    \;\;t\in(t_1,t_2).
\end{equation*}
 sending $t \rightarrow t_2$ in the above identity, we get
  $e\big(\frac{t_1+t_2}{2}\big)=0$, which contradicts  (\ref{z4}).
\end{proof}

\begin{lemma}\label{phungwang}
Let $\omega$ be a nonempty open subset of $\Omega$.
Then, there are  constants $N=N(\Omega,\omega)$ and $\alpha=\alpha(\Omega,\omega)\in(0,1)$,   such that when $0\leq t_1<t_2\leq T$,
  \begin{equation}\label{threeball}
    \|\nabla\psi(t_2)\|_{L^2(\Omega)}\leq\left(Ne^{\frac{N}{t_2-t_1}}\|\nabla
    \psi(t_2)\|_{L^2(\omega)}\right)
    ^{\alpha}\|\nabla\psi(t_1)\|
    ^{1-\alpha}_{L^2(\Omega)},
  \end{equation}
  for all solutions to Equation (\ref{streamWGS}).
 Consequently, $\psi\equiv0$ when $\nabla\psi(T,x)=0$ for a.e. $x\in\omega$.
  \end{lemma}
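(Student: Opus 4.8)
The plan is to reduce \eqref{threeball} to the interpolation inequality for the heat equation from \cite{wangzhang}. The key observation is that $w:=-\Delta\psi$ solves an ordinary heat equation: from \eqref{streamWGS} one gets $w_t=-\Delta\psi_t=-\Delta(\Delta\psi)=\Delta w$, and since $\Delta\psi=0$ on $\partial\Omega$ we have $w=0$ on $\partial\Omega\times(0,T)$. Thus $w$ is a solution of the heat equation with homogeneous Dirichlet data, and the estimate of \cite{wangzhang} applies to it: for some $\beta=\beta(\Omega,\omega')\in(0,1)$ and any nonempty open $\omega'\Subset\omega$,
\[
\|w(t_2)\|_{L^2(\Omega)}\leq\Bigl(Ne^{\frac{N}{t_2-t_0}}\|w(t_2)\|_{L^2(\omega')}\Bigr)^{\beta}\|w(t_0)\|_{L^2(\Omega)}^{1-\beta},
\]
which I will use with the midpoint $t_0=\tfrac{t_1+t_2}{2}$, so that $e^{N/(t_2-t_0)}=e^{2N/(t_2-t_1)}$.

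Next I convert the three $L^2(\Omega)$-norms of $w$ into norms of $\nabla\psi$. For the left-hand side, integrating by parts and using $\psi|_{\partial\Omega}=0$ together with the Poincar\'e inequality yields $\|\nabla\psi(t)\|_{L^2(\Omega)}^2=(w(t),\psi(t))\leq C_P\|w(t)\|_{L^2(\Omega)}\|\nabla\psi(t)\|_{L^2(\Omega)}$, hence $\|\nabla\psi(t_2)\|_{L^2(\Omega)}\leq C_P\|w(t_2)\|_{L^2(\Omega)}$. For the factor $\|w(t_0)\|_{L^2(\Omega)}=\|\Delta\psi(t_0)\|_{L^2(\Omega)}$, note that this cannot be bounded by $\|\nabla\psi\|$ at the same instant; instead I invoke the smoothing (analyticity) of the Dirichlet heat semigroup. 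Expanding $\psi(t_1)$ in the Dirichlet eigenbasis and using $\sup_{\lambda>0}\lambda e^{-2\lambda\theta}=\tfrac{1}{2e\theta}$ gives
\[
\|\Delta\psi(t_0)\|_{L^2(\Omega)}\leq\frac{C}{\sqrt{t_0-t_1}}\,\|\nabla\psi(t_1)\|_{L^2(\Omega)}=\frac{C}{\sqrt{(t_2-t_1)/2}}\,\|\nabla\psi(t_1)\|_{L^2(\Omega)},
\]
and the resulting algebraic factor $(t_2-t_1)^{-(1-\beta)/2}$ is absorbed into the exponential $Ne^{N/(t_2-t_1)}$.

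The remaining and most delicate point is the observation term $\|w(t_2)\|_{L^2(\omega')}=\|\Delta\psi(t_2)\|_{L^2(\omega')}$, which must be dominated by $\|\nabla\psi(t_2)\|_{L^2(\omega)}$. I expect this to be the main obstacle: at a fixed time there is no elliptic equation controlling $\Delta\psi$ by $\nabla\psi$ (for a generic $H^2$ function such a local bound is simply false), so the parabolic structure has to be used. The plan is to differentiate the heat equation, so that each $\partial_k\psi$ is caloric in the interior, and to run an interior Caccioppoli estimate with a cutoff equal to $1$ on $\omega'$ and supported in $\omega$; this controls $\iint|\Delta\psi|^2$ over a backward parabolic cylinder by $\tfrac{C}{r^2}\iint|\nabla\psi|^2$ over a slightly larger one. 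Passing from these space-time integrals back to the single time slice $t=t_2$ is then achieved with the help of the monotonicity of $t\mapsto\|\nabla\psi(t)\|_{L^2(\Omega)}$ from Lemma \ref{prop1}; this is precisely where the Sobolev interpolation inequality and the properties of the heat equation enter.

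Finally, chaining the three conversions with the heat interpolation inequality produces \eqref{threeball} with $\alpha:=\beta$ and a suitably enlarged $N=N(\Omega,\omega)$. The unique continuation consequence is then immediate: if $\nabla\psi(T,\cdot)=0$ a.e.\ on $\omega$, then $\|\nabla\psi(T)\|_{L^2(\omega)}=0$, so \eqref{threeball} (with $t_2=T$ and any $t_1<T$) forces $\|\nabla\psi(T)\|_{L^2(\Omega)}=0$; Lemma \ref{prop1} then gives $\nabla\psi(t)\equiv0$ for every $t\in[0,T]$, and since $\psi(t)\in H^1_0(\Omega)$ the Poincar\'e inequality yields $\psi\equiv0$.
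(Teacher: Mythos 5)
Your overall reduction is the paper's: you observe that $\Delta\psi$ solves the heat equation with homogeneous Dirichlet data, apply the interpolation estimate of \cite{wangzhang} to it, convert the left-hand side via Poincar\'e/elliptic regularity, and control $\|\Delta\psi\|_{L^2(\Omega)}$ at the intermediate time by spectral smoothing of the Dirichlet semigroup, absorbing the algebraic factor $(t_2-t_1)^{-(1-\beta)/2}$ into the exponential. All of these steps are sound and coincide with the published argument, as does your derivation of the unique continuation consequence from \eqref{threeball} and Lemma~\ref{prop1}.

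The gap is exactly where you flagged it: the conversion of the observation term $\|\Delta\psi(t_2)\|_{L^2(\omega')}$ into $\|\nabla\psi(t_2)\|_{L^2(\omega)}$, and your proposed route does not close. A Caccioppoli estimate for the caloric functions $\partial_k\psi$ bounds the space--time integral of $|\Delta\psi|^2$ over a backward cylinder ending at $t_2$ by $\rho^{-2}$ times the space--time integral of $|\nabla\psi|^2$ over a larger cylinder; recovering the top slice $\|\Delta\psi(t_2)\|_{L^2(\omega')}$ from the smaller cylinder is fine (a local energy estimate, since $\Delta\psi$ is itself caloric), but returning from the larger cylinder to the single slice $\|\nabla\psi(t_2)\|_{L^2(\omega)}$ requires bounding $\int_\omega|\nabla\psi(t)|^2\,dx$ at \emph{earlier} times $t<t_2$ by its value at $t_2$ --- a local backward estimate that is false in general: the local norm $t\mapsto\|\nabla\psi(t)\|_{L^2(\omega)}$ is not monotone, and such a quantitative backward bound is of the same depth as the lemma you are trying to prove. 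The global monotonicity of Lemma~\ref{prop1} cannot substitute: inserting $\|\nabla\psi(t)\|_{L^2(\Omega)}\leq\|\nabla\psi(t_1)\|_{L^2(\Omega)}$ into the cylinder integral erases the localization entirely, and the chained inequality degenerates to $\|\nabla\psi(t_2)\|_{L^2(\Omega)}\leq Ne^{N/(t_2-t_1)}\|\nabla\psi(t_1)\|_{L^2(\Omega)}$, which carries no unique-continuation information. The paper's fix stays on the slice $t=t_2$: by the Sobolev interpolation inequality on a ball $B_r\subset\omega$,
\[
\|\Delta\psi(t_2)\|^2_{L^2(B_r)}\leq\|\nabla\psi(t_2)\|^2_{H^1(B_r)}\leq N\|\nabla\psi(t_2)\|_{H^2(B_r)}\,\|\nabla\psi(t_2)\|_{L^2(B_r)},
\]
and the strong factor is then bounded globally, using $\psi=\Delta\psi=0$ on $\partial\Omega$, elliptic regularity and the same spectral smoothing you already used:
\[
\|\nabla\psi(t_2)\|_{H^2(B_r)}\leq\|\psi(t_2)\|_{H^3(\Omega)}\leq N\|\nabla\Delta\psi(t_2)\|_{L^2(\Omega)}\leq\frac{N}{t_2-t_1}\|\nabla\psi(t_1)\|_{L^2(\Omega)}.
\]
The only cost is a halving of the exponent on the observation term (the paper ends with the power $\frac{1-\alpha}{2}$ and renames it $\alpha$), which the statement of \eqref{threeball} tolerates. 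Replacing your cylinder step with this interpolation-plus-smoothing argument repairs the proof; your remark that no purely local elliptic bound of $\Delta\psi$ by $\nabla\psi$ at a fixed time exists is correct, but the interpolation trick circumvents it by letting a strong local norm appear and paying for it with the global earlier-time factor.
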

\begin{proof}
It suffices to prove the estimate \eqref{threeball} when $0< t_1<t_2\leq T$. Indeed, if it is the case, then, by  taking $t_1=\frac{t_2}{2}$
and noting that $\|\nabla\psi(t_1)\|_{L^2(\Omega)}\leq \|\nabla\psi(0)\|_{L^2(\Omega)}$ (see Lemma $\ref{prop1}$), we reach the estimate (\ref{threeball}) for $t_1=0$.

Because $\omega$ is a non-empty open set, there exists a ball
$B_r$, centered at a point $x_0\in\omega$ and of radius $r>0$,
such that $B_r\subset \omega$.
Since $\Delta\psi(\cdot)$ satisfies the heat equation with the zero Dirichlet boundary condition, it follows from \cite[Theorem 6]{wangzhang} (see also \cite[Proposition 2.1]{MR2652187} or \cite[Proposition 2.2]{unpublished1}) that   there
are constants $N=N(\Omega,B_r)$ and $\alpha=\alpha(\Omega,B_r)\in(0,1)$, such that
\begin{equation}\label{1.3}
  \|\Delta\psi(t_2)\|_{L^2(\Omega)}\leq \|\Delta\psi(t_3)\|_{L^2(\Omega)}^{\alpha}\left(Ne^{\frac{N}
  {t_2-t_3}}\|\Delta\psi(t_2)\|_{L^2(B_r)}\right)^{1-\alpha},
\end{equation}
when  $0< t_1<t_3<t_2\leq T$, and $\psi$ solves Equation (\ref{streamWGS}).

Because $\psi=0$ on $\partial\Omega\times(0,T)$, we obtain from the  regularity of elliptic equations that
\begin{equation}\label{1.4}
  \|\nabla\psi(t_2)\|_{L^2(\Omega)}\leq N(\Omega)\|\Delta\psi(t_2)\|_{L^2(\Omega)}.
\end{equation}
From the estimate:
\begin{equation*}
\|\Delta\psi(t_2)\|^2_{L^2(B_r)}\leq \|\nabla\psi(t_2)\|^2_{H^1(B_r)}
\end{equation*}
and the Sobolev interpolation inequality (see, for instance,
\cite[Theorem 5.2, pp.135]{af} or \cite[pp. 43-44]{lions}):
\begin{equation*}
\|\nabla\psi(t_2)\|_{H^1(B_r)}\leq N(B_r)\|\nabla\psi(t_2)\|^{\frac 12}_{H^2(B_r)}\|\nabla\psi(t_2)
\|^{\frac 12}_{L^2(B_r)},
\end{equation*}
it follows that
\begin{equation}\label{1.5}
 \|\Delta\psi(t_2)\|^2_{L^2(B_r)}\leq N(B_r)\|\nabla\psi(t_2)\|_{H^2(B_r)}\|\nabla\psi(t_2)
  \|_{L^2(B_r)}.
\end{equation}

On the other hand, since  $\Delta\psi=0$ on $\partial\Omega\times(0,T)$,    integrating by parts and the Cauchy-Schwartz inequality lead to
\begin{equation}\label{1.6}
  \|\Delta\psi(t_3)\|^2_{L^2(\Omega)}\leq \|\nabla\Delta\psi(t_3)\|_{L^2(\Omega)}\|\nabla\psi(t_3)
  \|_{L^2(\Omega)}.
\end{equation}
Combining inequalities \eqref{1.3}---\eqref{1.6}, we deduce that
\begin{equation}\label{1.9}
  \|\nabla\psi(t_2)\|_{L^2(\Omega)}\leq I_1^{\frac{\alpha}{2}}I_2^{\frac{\alpha}{2}}
  I_3^{\frac{1-\alpha}{2}}I_4^{\frac{1-\alpha}{2}},
\end{equation}
where
\begin{equation*}
  \left\{
  \begin{array}{ll}
I_1=\|\nabla\Delta\psi(t_3)\|_{L^2(\Omega)},\\
I_2=\|\nabla\psi(t_3)\|_{L^2(\Omega)},\\
I_3=\|\nabla\psi(t_2)\|_{H^2(B_r)},\\
I_4=Ne^{\frac{N}{t_2-t_3}}\|\nabla\psi(t_2)\|_{L^2(B_r)},\;\;
N=N(\Omega,B_r).
   \end{array}
  \right.
 \end{equation*}

Next, write $\{\lambda_i\}_{i\geq 1}$, with $0<\lambda_1<\lambda_2\leq \lambda_3\leq\cdots$, for the eigenvalues of the Laplace operator $-\Delta$ with zero Dirichlet boundary condition, and $\{e_i\}_{i\geq 1}$ for the corresponding set of $L^2(\Omega)$-normalized
eigenfunctions. Since $\Delta\psi$ satisfies the heat equation with
zero Dirichlet boundary condition, it holds that
\begin{equation*}\label{formula}
  \Delta\psi(t_3)=\sum_{i\geq1}e^{-\lambda_i(t_3-t_1)}(\Delta\psi(t_1)
  ,e_i)e_i.
\end{equation*}
Thus,
\begin{equation}\label{1.71}
\begin{split}
I^2_1&= \sum_{i\geq1}|(\Delta\psi(t_1),e_i)|^2
\lambda_ie^{-2\lambda_i(t_3-t_1)}\\
&\leq \sup_{i\geq 1}\left(\lambda_ie^{-\lambda_i(t_3-t_1)}\right)
\sum_{i\geq1}|(\Delta\psi(t_1),e_i)|^2e^{-\lambda_i(t_3-t_1)}\\
&\leq\frac{1}{t_3-t_1}\left\|\Delta\psi\left(\frac{t_1+t_3}{2}\right)
\right\|^2_{L^2(\Omega)}.\\
\end{split}
\end{equation}
Because $\psi$  verifies the heat equation with zero Dirichlet boundary condition, it stands that
\begin{equation*}\label{formula}
  \psi\left(\frac{t_1+t_3}{2}\right)=\sum_{i\geq1}e^{-\lambda_i(t_3-t_1)/2}
  (\psi(t_1),e_i)e_i.
\end{equation*}
Hence,
\begin{equation}\label{he1}
\begin{split}
\left\|\Delta\psi\left(\frac{t_1+t_3}{2}\right)
\right\|^2_{L^2(\Omega)}
&=\sum_{i\geq 1}\lambda_i^2 e^{-\lambda_i(t_3-t_1)}|(\psi(t_1),e_i)|^2\\
&\leq \sup_{i\geq1}\left(\lambda_ie^{-\lambda_i(t_3-t_1)}\right)
\sum_{i\geq 1}\lambda_i|(\psi(t_1),e_i)|^2\\
&\leq \frac{1}{t_3-t_1}\big\|\nabla \psi(t_1)\big\|^2_{L^2(\Omega)}.
\end{split}
\end{equation}
This, together with (\ref{1.71}), leads to
\begin{equation}\label{1.7}
I^2_1\leq \frac{1}{(t_3-t_1)^2}\|\nabla\psi(t_1)\|^2_{L^2(\Omega)}.
\end{equation}
Since $\psi=0$ and $\Delta\psi=0$ on $\partial\Omega\times(0,T)$, applying the elliptic regularity and the Poincar\'{e} inequality, we obtain that
\begin{equation*}
\begin{split}
  \|\nabla\psi(t_2)\|^2_{H^2(B_r)}\leq \|\psi(t_2)\|^2_{H^3(\Omega)}\leq N(\Omega)\|\Delta\psi(t_2)\|^2_{H^1(\Omega)}
   \leq N(\Omega) \|\nabla\Delta\psi(t_2)\|^2_{L^2(\Omega)}.
   \end{split}
  \end{equation*}
By similar arguments as those to derive  (\ref{1.71})---(\ref{1.7}), we can verify that
\begin{equation*}
\begin{split}
 \|\nabla\Delta\psi(t_2)\|^2_{L^2(\Omega)}
 \leq \frac{1}{(t_2-t_1)^2}\|\nabla\psi(t_1)\|^2_{L^2(\Omega)}.
   \end{split}
\end{equation*}
Now, the above two estimates yield that
\begin{equation*}\label{1.8}
 I_3^2=\|\nabla\psi(t_2)\|^2_{H^2(B_r)}
  \leq \frac{N(\Omega)}{(t_2-t_1)^2}\|\nabla\psi(t_1)\|^2_{L^2(\Omega)}.
\end{equation*}
Along with \eqref{1.9} and \eqref{1.7}, this leads to
  \begin{equation*}
  \begin{split}
    &\|\nabla\psi(t_2)\|_{L^2(\Omega)}\\ &\leq(t_3-t_1)^{-\frac 12}\left(Ne^{\frac{N}{t_2-t_3}}\|\nabla\psi(t_2)
    \|_{L^2(B_r)}\right)^{\frac {1-\alpha}{2}}
    \|\nabla\psi(t_1)\|^{\frac 12}_{L^2(\Omega)}\|\nabla\psi(t_3)\|^{\frac{\alpha}{2}}
    _{L^2(\Omega)}\\
    &\leq(t_3-t_1)^{-\frac 12}\left(Ne^{\frac{N}{t_2-t_3}}\|\nabla\psi(t_2)
    \|_{L^2(B_r)}\right)^{\frac {1-\alpha}{2}}
    \|\nabla\psi(t_1)\|^{\frac {1+\alpha}{2}}_{L^2(\Omega)},\\
\end{split}
\end{equation*}
when  $0< t_1<t_3<t_2\leq T$.
Choosing $t_3=\frac{t_1+t_2}{2}$ in last estimates and recalling that
$B_r\subset\omega$, we get at once that
  \begin{equation}\label{stands}
    \|\nabla\psi(t_2)\|_{L^2(\Omega)}\leq \left(Ne^{\frac{N}{t_2-t_1}}\|
    \nabla\psi(t_2)\|_{L^2(\omega)}\right)^{\frac {1-\alpha}{2}}\|\nabla\psi(t_1)\|^{\frac {1+\alpha}{2}}_{L^2(\Omega)}.
  \end{equation}
  The desired estimate \eqref{threeball}  stands  if we replace $\frac{1-\alpha}{2}$ by $\alpha$ in \eqref{stands}.

  Finally, the unique continuation in the second part of this lemma is an immediate consequence of the estimate \eqref{threeball} and Lemma~\ref{prop1}.
\end{proof}

\begin{proof}[\bf Proof of Theorem~\ref{uniquecontinuation}]
Arbitrarily fix a $\mathbf{u}_0\in L^2_{\sigma}(\Omega)$. Since $\Omega$ is simply connected, according to Lemma~\ref{rottheorem}, there exists a unique stream function $\psi_0\in H_0^1(\Omega)$ such that $\mathbf{curl}~\psi_0=\mathbf{u}_0$.
 Let $\psi$ be the solution to Equation \eqref{stream} with the aforementioned  $\psi_0$. By Proposition $\ref{wellpose}$,   $\mathbf{u}:=\curl\psi$ is the unique solution of Equations \eqref{stokes} with the initial condition $\mathbf{u}(0)=\mathbf{u}_0$.
From Lemma $\ref{phungwang}$, it follows that $\psi$ holds
the estimate \eqref{threeball}.
Since
$$\|\nabla\psi(t)\|_{L^2(O)}=
  \|\curl\psi(t)\|_{L^2(O)}=\|\mathbf{u}(t)\|_{L^2(O)}\;\;\mbox{ for each}\;\;t\in [0,T],
  $$
  where $O$ is either $\Omega$ or $\omega$, and because  $\mathbf{u}_0$ was arbitrarily taken from $L^2_{\sigma}(\Omega)$,
  the desired estimate \eqref{c*} follows  from (\ref{threeball}) at once.

   Consequently, if $\|\mathbf{u}(t)\|_{L^2(\omega)}=0$ for some $t>0$, then by the estimate \eqref{threeball} and Lemma~\ref{phungwang}, we find that $\mathbf{u}\equiv0$.

   Finally, by making use of  the new strategy
  in \cite{unpublished1}, we can derive the observability estimate \eqref{c88} from \eqref{c*}.
\end{proof}


\section{Applications }
\setcounter{equation}{0}
$\;\;\;\;$Let $T>0$ and $\Omega\subset\mathbb{R}^2$ be a bounded and simply connected domain with a $C^3$ boundary $\partial\Omega$.
Consider the following controlled Stokes equations:
\begin{equation}\label{control}
  \begin{cases}
\mathbf{u}_t-\Delta\mathbf{u}-\nabla p=\mathbf{f}\;\;\;\;&\text{in}\;\;\Omega\times(0,T),\\
\nabla\cdot\mathbf{u}=0\;\;\;\;\;\;\;&\text{in}\;\;\Omega\times(0,T),\\
\rot \mathbf{u}=0,\;\;\mathbf{u}\cdot \mathbf{n}=0\;\;\;\;&\text{on}\;\;\partial\Omega\times(0,T),\\
\mathbf{u}(\cdot,0)=\mathbf{u}_0(\cdot) \;\;\;\;&\text{in}\;\;\Omega.
\end{cases}
\end{equation}
In what follows,  $H^{-1}_\sigma(\Omega)$  stands for the dual of $H^1_\sigma(\Omega)$, and
$\langle\cdot,\cdot\rangle$  the scalar product between $H^{-1}_\sigma(\Omega)$ and $H^1_\sigma(\Omega)$. We first define the weak solution to equation \eqref{control}.

\begin{definition}
For each $\mathbf{f}\in L^2(0,T;L^2(\Omega))$ and each $\mathbf{u}_0\in L^2_{\sigma}(\Omega)$, $\mathbf{u}$ is called a weak solution of Equations $(\ref{control})$, if
\begin{equation*}
  \mathbf{u}\in  C([0,T];L^2_{\sigma}(\Omega))\cap L^2(0,T;H^1_{\sigma}(\Omega))
  ,\quad\mathbf{u}_t\in L^2(0,T;H^{-1}_{\sigma}(\Omega))~\text{with}~ \mathbf{u}(\cdot,0)=\mathbf{u}_0
\end{equation*}
and
\begin{equation} \label{weakformula}
       \int^T_0\langle\mathbf{u}_t,\mathbf{v}\rangle ~dt +\int^T_0(\rot\mathbf{u},\rot\mathbf{v})~dt=
    \int^T_0(\mathbf{f},\mathbf{v})~dt,~\forall\,\mathbf{v}\in L^2(0,T;H^1_{\sigma}(\Omega)).
  \end{equation}
\end{definition}
\begin{remark}
{\it
It can be verified that the solution $\mathbf{u}$ obtained in Proposition $\ref{wellpose}$ is  a weak solution of Equations $(\ref{control})$ with $\mathbf{f}=0$.}
\end{remark}
The following proposition is concerned with the existence and uniqueness of the weak solution of Equations \eqref{control} and we leave its proof  in Appendix of this paper.
\begin{prop}\label{wellposeness}
For each $\mathbf{f}\in L^2(0,T;L^2(\Omega))$ and each $\mathbf{u}_0\in L^2_{\sigma}(\Omega)$, Equations $(\ref{control})$ has a unique weak solution.
\end{prop}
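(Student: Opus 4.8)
The plan is to recast \eqref{control} as an abstract linear parabolic problem governed by a time-independent, continuous and coercive bilinear form, and then to invoke the standard existence--uniqueness theory for such evolution equations. First I would fix the Gelfand triple
$$V:=H^1_\sigma(\Omega)\hookrightarrow H:=L^2_\sigma(\Omega)\hookrightarrow V':=H^{-1}_\sigma(\Omega),$$
where $H$ is identified with its own dual, both embeddings are continuous and dense, and the first is compact by the Rellich--Kondrachov theorem. On $V\times V$ I would introduce the bilinear form $a(\mathbf{u},\mathbf{v}):=(\rot\mathbf{u},\rot\mathbf{v})$, so that the weak formulation \eqref{weakformula} reads $\langle\mathbf{u}_t,\mathbf{v}\rangle+a(\mathbf{u},\mathbf{v})=(\mathbf{f},\mathbf{v})$ for a.e.\ $t\in(0,T)$ and every $\mathbf{v}\in V$.

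The two structural properties to verify are continuity and coercivity of $a$. Continuity is immediate, since $|a(\mathbf{u},\mathbf{v})|\leq\|\rot\mathbf{u}\|_{L^2(\Omega)}\|\rot\mathbf{v}\|_{L^2(\Omega)}\leq\|\mathbf{u}\|_{H^1(\Omega)}\|\mathbf{v}\|_{H^1(\Omega)}$. Coercivity is the crux: I need a constant $c=c(\Omega)>0$ with $a(\mathbf{v},\mathbf{v})=\|\rot\mathbf{v}\|^2_{L^2(\Omega)}\geq c\|\mathbf{v}\|^2_{H^1(\Omega)}$ for all $\mathbf{v}\in V$. This is exactly the div--curl--Poincar\'{e} inequality already underlying the energy estimate \eqref{estimatecz}: because $\Omega$ is simply connected, every $\mathbf{v}\in H^1_\sigma(\Omega)$ may be written $\mathbf{v}=\curl\psi$ with a unique $\psi\in H^1_0(\Omega)$ by Lemma~\ref{rottheorem}, whence $\rot\mathbf{v}=-\Delta\psi$; elliptic regularity on the $C^3$ domain gives $\|\psi\|_{H^2(\Omega)}\leq N(\Omega)\|\Delta\psi\|_{L^2(\Omega)}$, and therefore $\|\mathbf{v}\|_{H^1(\Omega)}\leq N(\Omega)\|\psi\|_{H^2(\Omega)}\leq N(\Omega)\|\rot\mathbf{v}\|_{L^2(\Omega)}$. (If one prefers not to invoke genuine coercivity, a G\r{a}rding-type estimate $a(\mathbf{v},\mathbf{v})+\lambda\|\mathbf{v}\|^2_H\geq c\|\mathbf{v}\|^2_V$ suffices equally well, the shift $\lambda$ being absorbed by the substitution $\mathbf{u}\mapsto e^{-\lambda t}\mathbf{u}$.)

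With continuity and coercivity in hand, existence of a solution $\mathbf{u}\in L^2(0,T;V)$ with $\mathbf{u}_t\in L^2(0,T;V')$ and $\mathbf{u}(0)=\mathbf{u}_0$ follows from the classical Faedo--Galerkin scheme (or directly from the abstract theorem of J.-L.\ Lions for parabolic variational equations). Concretely, I would take as Galerkin basis the $H$-orthonormal eigenfunctions of the operator $\curl\rot$ associated with $a$, which exist precisely because $V\hookrightarrow H$ is compact, solve the resulting finite-dimensional linear ODE systems, and derive the a priori bound $\sup_{t}\|\mathbf{u}_m(t)\|^2_H+\int_0^T\|\mathbf{u}_m(t)\|^2_V\,dt\leq N\big(\|\mathbf{u}_0\|^2_H+\|\mathbf{f}\|^2_{L^2(0,T;L^2(\Omega))}\big)$ by testing with $\mathbf{u}_m$ and using coercivity. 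Extracting a weakly convergent subsequence and passing to the limit in \eqref{weakformula} yields a solution; the regularity $\mathbf{u}\in L^2(0,T;V)$, $\mathbf{u}_t\in L^2(0,T;V')$ then forces $\mathbf{u}\in C([0,T];H)$ and the attainment of the initial datum in $H$.

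Finally, uniqueness is a one-line consequence of linearity and coercivity. Given two weak solutions, their difference $\mathbf{w}$ solves \eqref{weakformula} with $\mathbf{f}=0$ and $\mathbf{w}(0)=0$; testing with $\mathbf{v}=\mathbf{w}$ and using the Lions--Magenes identity $\frac{d}{dt}\|\mathbf{w}\|^2_H=2\langle\mathbf{w}_t,\mathbf{w}\rangle$ gives $\frac{d}{dt}\|\mathbf{w}\|^2_H=-2\|\rot\mathbf{w}\|^2_{L^2(\Omega)}\leq0$, so that $\mathbf{w}\equiv0$ by Gr\"onwall's inequality. The main obstacle is therefore the coercivity step: it is exactly where the geometry of $\Omega$ (simple connectedness) enters, and where I would lean on Lemma~\ref{rottheorem} together with the decomposition theorem cited in Section~2 to convert control of $\rot\mathbf{v}$ into control of the full $H^1$ norm.
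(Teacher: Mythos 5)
Your proposal is correct and follows essentially the same route as the paper: both recast \eqref{control} as a parabolic variational problem for the bilinear form $a(\mathbf{u},\mathbf{v})=(\rot\mathbf{u},\rot\mathbf{v})$ on $H^1_\sigma(\Omega)$, obtain its coercivity from the simple connectedness of $\Omega$, invoke the Lions existence theorem (which your Faedo--Galerkin sketch merely unpacks), and conclude uniqueness by the standard energy identity. The only difference is cosmetic: the paper cites the coercivity inequality $\|\rot\mathbf{v}\|_{L^2(\Omega)}^2\geq N(\Omega)\|\mathbf{v}\|^2_{H^1(\Omega)}$ directly from Girault--Raviart, whereas you re-derive it via the stream function of Lemma~\ref{rottheorem} and elliptic regularity, which is a valid self-contained substitute.
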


In what follows, we will denote by $\mathbf{u}(\cdot; \mathbf{f})$ the weak solution to Equation (\ref{control}) corresponding to the exterior force $ \mathbf{f}$, when the initial datum $\mathbf{u}_0$ is given.
\subsection{Null Controllability of the Stokes Equations}
$\;\;\;\;$In this subsection, we will
show that Theorem~\ref{uniquecontinuation} implies the null controllability of Stokes equations with control restricted over $\omega\times E$,
where $\omega\subset\Omega$ is a nonempty open subset, and $E$ is a  subset of positive measure in $(0,T)$.
Denoted by $\mathbf{u}(\cdot\,;\mathbf{f}\chi_{\omega}\chi_{E})$
the unique weak solution to Equations \eqref{control} corresponding to the control $\mathbf{f}$ restricted on the subset $\omega\times E$.
\begin{corollary}\label{theoremcan3}
For each $\mathbf{u}_0\in L^2_{\sigma}(\Omega)$, there exists a control $\mathbf{f}\in L^{\infty}(0,T;L^2(\Omega))$, with
\begin{equation}\label{controlcost}
\|\mathbf{f}\|_{L^\infty(0,T;L^2(\Omega))}\leq N\|\mathbf{u}_0\|_{L^2(\Omega)}, \;\;N=N(\Omega,\omega,E,T),
\end{equation}
such that $\mathbf{u}(T;\mathbf{f}\chi_{\omega}\chi_{E})=0$.
\end{corollary}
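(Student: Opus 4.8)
The plan is to deduce the corollary from the observability estimate \eqref{c88} by the standard duality (Hilbert Uniqueness Method) argument, exploiting that the spatial bilinear form $(\rot\,\cdot\,,\rot\,\cdot\,)$ appearing in \eqref{weakformula} is symmetric, so that the Stokes operator is self-adjoint. First I would introduce the adjoint system: given terminal data $\boldsymbol{\varphi}_T\in L^2_\sigma(\Omega)$, let $\boldsymbol{\varphi}$ solve the backward problem $-\boldsymbol{\varphi}_t-\Delta\boldsymbol{\varphi}-\nabla\pi=0$ with $\rot\boldsymbol{\varphi}=0$, $\boldsymbol{\varphi}\cdot\mathbf{n}=0$ on $\p\Omega\times(0,T)$ and $\boldsymbol{\varphi}(T)=\boldsymbol{\varphi}_T$. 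After the change of variable $t\mapsto T-t$ this is exactly Equations \eqref{stokes}, so Proposition \ref{wellpose} gives well-posedness, and, crucially, the observability estimate \eqref{c88} transfers to $\boldsymbol{\varphi}$ in the form $\|\boldsymbol{\varphi}(0)\|_{L^2(\Omega)}\leq N\int_0^T\chi_E\|\boldsymbol{\varphi}(t)\|_{L^2(\omega)}\,dt$. Pairing \eqref{control} (with $\mathbf{f}$ replaced by $\mathbf{f}\chi_\omega\chi_E$) against $\boldsymbol{\varphi}$ and integrating by parts via Lemma \ref{lemmacan2} yields the duality identity
\[
(\mathbf{u}(T),\boldsymbol{\varphi}_T)=(\mathbf{u}_0,\boldsymbol{\varphi}(0))+\int_0^T\chi_E(\mathbf{f},\boldsymbol{\varphi})_{L^2(\omega)}\,dt .
\]
Hence $\mathbf{u}(T)=0$ is equivalent to requiring the right-hand side to vanish for every $\boldsymbol{\varphi}_T$.

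To produce such an $\mathbf{f}$, I would minimize over $\boldsymbol{\varphi}_T$ the functional
\[
J(\boldsymbol{\varphi}_T)=\frac12\Big(\int_0^T\chi_E\|\boldsymbol{\varphi}(t)\|_{L^2(\omega)}\,dt\Big)^2+(\mathbf{u}_0,\boldsymbol{\varphi}(0)).
\]
This $J$ is continuous and convex, and the transferred observability estimate furnishes coercivity: writing $L(\boldsymbol{\varphi}_T)=\int_0^T\chi_E\|\boldsymbol{\varphi}\|_{L^2(\omega)}\,dt$ one has $J\geq\tfrac12 L^2-N\|\mathbf{u}_0\|_{L^2(\Omega)}L$. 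By the direct method $J$ attains a minimum at some $\hat{\boldsymbol{\varphi}}_T$; the unique continuation consequence of Theorem \ref{uniquecontinuation} (a solution vanishing on $\omega$ over a positive-measure time set vanishes identically) ensures that $L$ is a genuine norm, giving uniqueness.

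Writing the first-order optimality condition at $\hat{\boldsymbol{\varphi}}_T$ and comparing it with the vanishing-pairing requirement, I would read off the control
\[
\mathbf{f}=L(\hat{\boldsymbol{\varphi}}_T)\,\frac{\hat{\boldsymbol{\varphi}}}{\|\hat{\boldsymbol{\varphi}}(t)\|_{L^2(\omega)}}\,\chi_\omega\chi_E ,
\]
where $\hat{\boldsymbol{\varphi}}$ is the adjoint state generated by $\hat{\boldsymbol{\varphi}}_T$. Since the normalized factor has unit $L^2(\omega)$-norm for a.e.\ $t\in E$, it follows that $\|\mathbf{f}(t)\|_{L^2(\Omega)}=L(\hat{\boldsymbol{\varphi}}_T)$ for a.e.\ $t\in E$ (the bang-bang structure), hence $\mathbf{f}\in L^\infty(0,T;L^2(\Omega))$. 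The cost bound \eqref{controlcost} then follows from $J(\hat{\boldsymbol{\varphi}}_T)\leq J(0)=0$, which gives $\tfrac12 L(\hat{\boldsymbol{\varphi}}_T)^2\leq-(\mathbf{u}_0,\hat{\boldsymbol{\varphi}}(0))\leq N\|\mathbf{u}_0\|_{L^2(\Omega)}L(\hat{\boldsymbol{\varphi}}_T)$, whence $\|\mathbf{f}\|_{L^\infty(0,T;L^2(\Omega))}=L(\hat{\boldsymbol{\varphi}}_T)\leq 2N\|\mathbf{u}_0\|_{L^2(\Omega)}$.

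\textbf{The main obstacle} I expect is twofold, and both difficulties are exactly where \eqref{c88} does the real work. First, $J$ is coercive only in the weak observation norm $L$, not in the $L^2_\sigma(\Omega)$-norm of $\boldsymbol{\varphi}_T$, so the minimizer a priori lives in the completion of $L^2_\sigma(\Omega)$ under $L$; one must verify, via the observability estimate together with a density or Gelfand-triple argument, that the associated adjoint state and the control built above remain admissible — this is precisely the point addressed by the new strategy of \cite{unpublished1}. Second, the $L^1$-in-time penalization makes $J$ nondifferentiable at those $\boldsymbol{\varphi}_T$ for which $\|\hat{\boldsymbol{\varphi}}(t)\|_{L^2(\omega)}$ vanishes on a positive-measure subset of $E$, so the optimality condition must be read through the subdifferential, and the normalization $\hat{\boldsymbol{\varphi}}/\|\hat{\boldsymbol{\varphi}}\|_{L^2(\omega)}$ justified a.e.\ by the unique continuation property. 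The remaining verifications are routine.
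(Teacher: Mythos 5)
Your duality framework (time-reversed adjoint system, transferred observability estimate, duality pairing) agrees with the paper's, but the mechanism you use to produce the control is genuinely different, and it is exactly there that your argument has a gap. The paper never minimizes a functional: it defines the linear map $\mathbf{F}(\overline{\mathbf{v}})=-(\mathbf{v}(0),\mathbf{u}_0)$ on the subspace $\mathbf{X}=\{\mathbf{v}\chi_\omega\chi_E\}$ of $L^1(0,T;L^2(\Omega))$ (well defined by unique continuation, bounded with norm at most $N\|\mathbf{u}_0\|_{L^2(\Omega)}$ by \eqref{c88}), extends it by Hahn--Banach, and represents the extension by an element $\mathbf{f}$ of the dual space $L^\infty(0,T;L^2(\Omega))$; the identity $-(\mathbf{v}(0),\mathbf{u}_0)=\int_0^T(\mathbf{f}\chi_\omega\chi_E,\mathbf{v})\,dt$, combined with \eqref{weakformula}, then forces $\mathbf{u}(T;\mathbf{f}\chi_\omega\chi_E)=0$. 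This route needs no compactness whatsoever, which is precisely what it buys. Your step ``by the direct method $J$ attains a minimum at some $\hat{\boldsymbol{\varphi}}_T$'' is the weak point: $J$ is coercive only for the observation seminorm $L$, and the completion of $L^2_\sigma(\Omega)$ under $L$ is isometric to the closure of $\mathbf{X}$ in $L^1(E;L^2(\omega))$, a \emph{non-reflexive} Banach space. On such a space a continuous, convex, coercive functional need not attain its infimum: bounded minimizing sequences have no weakly (or weak-star) convergent subsequences in general, and neither a density nor a Gelfand-triple argument restores this. Note that weak lower semicontinuity is not the obstruction --- by parabolic smoothing the maps $\boldsymbol{\varphi}_T\mapsto\boldsymbol{\varphi}(0)$ and $\boldsymbol{\varphi}_T\mapsto\boldsymbol{\varphi}\chi_{\omega\times E}$ are weak-to-strong continuous on $L^2_\sigma(\Omega)$ --- the obstruction is that a minimizing sequence $\boldsymbol{\varphi}_T^n$ is in no way bounded in $L^2_\sigma(\Omega)$ (observability controls $\boldsymbol{\varphi}^n(0)$, not $\boldsymbol{\varphi}_T^n$; backward uniqueness is qualitative, not quantitative). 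This is exactly the difference from the classical $L^2$-HUM functional, whose completion is a Hilbert space; the $L^1$-in-time penalization forced on you by the target bound \eqref{controlcost} destroys the compactness the direct method needs, and it is plausibly why the paper takes the Hahn--Banach route.

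Your scheme can be repaired, but not as sketched: the standard fix is to minimize the penalized functionals $J_\varepsilon(\boldsymbol{\varphi}_T)=J(\boldsymbol{\varphi}_T)+\varepsilon\|\boldsymbol{\varphi}_T\|_{L^2(\Omega)}$ over $L^2_\sigma(\Omega)$ (these are coercive in $L^2_\sigma(\Omega)$ and weakly lower semicontinuous, so minimizers exist), read off controls $\mathbf{f}_\varepsilon$ with the uniform bound $\|\mathbf{f}_\varepsilon\|_{L^\infty(0,T;L^2(\Omega))}\leq 2N\|\mathbf{u}_0\|_{L^2(\Omega)}$ steering $\|\mathbf{u}(T)\|_{L^2(\Omega)}\leq\varepsilon$, and pass to a weak-star limit in $L^\infty(0,T;L^2(\Omega))$ as $\varepsilon\to 0$. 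With that replacement the rest of your computations (the transferred observability for the backward problem via $t\mapsto T-t$, the duality identity, the cost estimate from $J(\hat{\boldsymbol{\varphi}}_T)\leq J(0)=0$, and the a.e.\ nonvanishing of $\|\hat{\boldsymbol{\varphi}}(t)\|_{L^2(\omega)}$ via unique continuation) are sound, and your subdifferential reading of the optimality condition is the right way to handle the nonsmoothness. Two further remarks: the bang-bang structure of the control you obtain is a bonus that Corollary~\ref{theoremcan3} does not require --- the paper derives the bang-bang property separately (Corollary~\ref{cor1}) following \cite{unpublished1} --- and the limiting procedure loses it anyway (weak-star limits need not preserve $\|\mathbf{f}(t)\|_{L^2(\Omega)}=\mathrm{const}$); also, your constant $2N$ versus the paper's $N$ is immaterial, since both depend only on $(\Omega,\omega,E,T)$.
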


Before giving the proof of Corollary~\ref{theoremcan3}, we first state an interpolation lemma quoted from \cite[pp. 260-261]{TemamNavier}.
\begin{lemma}
Let $V, H, V'$ be three Hilbert spaces, each space included in the following one: $V\subset H\equiv H'\subset V'$, $V'$ being the dual of $V$. If a function $u\in L^2(0,T;V)$ and its derivative $u'\in L^2(0,T;V')$, then $u$ is almost everywhere equal to a continuous function from $[0,T]$ into $H$ and we have the following equality, which holds in the scalar distribution sense on $(0,T)$:
  \begin{equation}\label{derivative}
    \frac{d}{dt}\|u\|_{H}^2=2\langle u',u\rangle_{V',V}.
  \end{equation}
\end{lemma}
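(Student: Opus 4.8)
The plan is to prove the result by the classical regularization-in-time technique, reducing everything to the elementary product rule for smooth Hilbert-space-valued functions and then passing to the limit. The heart of the matter is to establish, for a.e. $s,t\in[0,T]$, the integrated identity
\[
\|u(t)\|_H^2 - \|u(s)\|_H^2 = 2\int_s^t \langle u'(\tau), u(\tau)\rangle_{V',V}\, d\tau .
\]
Once this is in hand, the continuity of the $H$-valued representative and the distributional formula \eqref{derivative} both follow at once, the latter simply by differentiating the identity on $(0,T)$.

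First I would establish that $C^1([0,T];V)$ is dense in the space $W := \{v\in L^2(0,T;V) : v'\in L^2(0,T;V')\}$ equipped with its natural graph norm, and then pick $u_m\in C^1([0,T];V)$ with $u_m\to u$ in $L^2(0,T;V)$ and $u_m'\to u'$ in $L^2(0,T;V')$. The density itself I would obtain by extending $u$ to a slightly larger time interval and mollifying in time; since convolution commutes with the distributional time-derivative, the mollified derivative is exactly the mollification of $u'$, so both convergences hold simultaneously.

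Next, for each fixed $m$ the function $u_m$ is of class $C^1$ with values in $V$, so the ordinary product rule gives $\frac{d}{dt}\|u_m\|_H^2 = 2(u_m',u_m)_H$. Here I would invoke the defining feature of the Gelfand triple $V\subset H\equiv H'\subset V'$, namely that the duality pairing restricts to the $H$-inner product on $H\times V$, to rewrite the right-hand side as $2\langle u_m',u_m\rangle_{V',V}$, and then integrate from $s$ to $t$. To pass to the limit I would show $\{u_m\}$ is Cauchy in $C([0,T];H)$: applying the same identity to the difference $u_m-u_n$, integrating in $s$ over $(0,T)$ and taking the supremum over $t$, a Cauchy–Schwarz estimate bounds $\sup_t\|u_m(t)-u_n(t)\|_H^2$ by the $L^2(V)$ and $L^2(V')$ norms of $u_m-u_n$ and $u_m'-u_n'$, which tend to $0$. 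Thus $u_m\to\tilde u$ in $C([0,T];H)$; since also $u_m\to u$ in $L^2(0,T;H)$, we get $\tilde u=u$ a.e., giving the continuous representative. Passing to the limit in the integrated identity for $u_m$ — the left side by uniform convergence in $H$, the right side by the $L^2(V)\times L^2(V')$ convergence of $(u_m,u_m')$ — yields the displayed identity for $u$, and hence \eqref{derivative}.

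The main obstacle is the approximation step. One must arrange the time-regularization so that it converges simultaneously for $u$ in $L^2(0,T;V)$ and for $u'$ in $L^2(0,T;V')$, and this forces a careful extension of $u$ past the endpoints $t=0$ and $t=T$: a naive one-sided mollifier destroys convergence of the derivative near the boundary, yet one cannot extend by prescribing boundary values since no trace of $u$ is available a priori (that trace is precisely what the lemma produces). The second, milder subtlety is to verify cleanly that the elementary product rule genuinely upgrades to the duality pairing, using only the continuous dense injections $V\hookrightarrow H\hookrightarrow V'$ and the identification $H\equiv H'$.
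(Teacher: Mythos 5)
The paper contains no proof of this lemma to compare against: it is quoted verbatim, with an explicit citation, from \cite[pp.~260--261]{TemamNavier}, and the authors use it as a black box in the proof of Corollary 4.4. Your proposal is correct, and it is essentially the classical proof found in that cited reference (and in Lions--Magenes): density of $C^1([0,T];V)$ in the graph space $W$ via extension and time-mollification, the elementary product rule $\frac{d}{dt}\|u_m\|_H^2=2(u_m',u_m)_H$ upgraded through the Gelfand-triple identity $\langle f,v\rangle_{V',V}=(f,v)_H$ for $f\in H$, $v\in V$, a Cauchy estimate in $C([0,T];H)$ obtained by applying the integrated identity to $u_m-u_n$ and averaging in $s$, and passage to the limit; differentiating the resulting integrated identity gives \eqref{derivative} in the scalar distribution sense since $\langle u',u\rangle\in L^1(0,T)$. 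The one point you leave open --- the endpoint extension --- has a standard resolution that does not require any trace of $u$: extend by even reflection, $\tilde u(t)=u(|t|)$ near $t=0$, and check directly against scalar test functions that the distributional derivative of $\tilde u$ is the odd reflection of $u'$ with no singular part at $0$; the verification reduces to integration by parts against $C^1$ functions $\psi$ with $\psi(0)=\psi(T)=0$, which follows from a cutoff argument using only $|\psi(t)|\leq Ct$ and $u\in L^2(0,T;V)$. Alternatively one can translate inward, mollifying $u(\cdot+\epsilon)$ at a scale $o(\epsilon)$ near $t=0$ (and symmetrically at $t=T$), glued by a partition of unity. With either device your density step closes and the remainder of your argument is complete.
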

\begin{remark} The equality $(\ref{derivative})$ is meaningful
since $\langle u'(t),u(t)\rangle_{V',V}$ is integrable on $(0,T)$. Using $(\ref{derivative})$ for $u+v$, we have
  \begin{equation}\label{derivative1}
    \frac{d}{dt}(u, v)_H = \langle u',v\rangle_{V',V}+\langle v',u\rangle_{V',V},
  \end{equation}
  for all  $u,v\in L^2(0,T;V)$ with $u',v'\in L^2(0,T;V')$.
\end{remark}

\begin{proof}[\bf Proof of Corollary \ref{theoremcan3}]
We first introduce the following adjoint system of Equations \eqref{control}:
 \begin{equation}\label{adjoint}
  \begin{cases}
\mathbf{v}_t+\Delta\mathbf{v}+\nabla q=0\;\;\;\;&\text{in}\;\;\Omega\times(0,T),\\
\nabla\cdot\mathbf{v}=0\;\;\;\;\;\;\;&\text{in}\;\;\Omega\times(0,T),\\
\rot \mathbf{v}=0,\;\;\mathbf{v}\cdot \mathbf{n}=0\;\;\;\;&\text{on}\;\;\partial\Omega\times(0,T),\\
\mathbf{v}(\cdot,T)=\mathbf{v}_T(\cdot)\;\;\;\;&\text{in}\;\;\Omega.
\end{cases}
\end{equation}
For each $\mathbf{v}_T\in L^2_{\sigma}(\Omega)$, according to Proposition $\ref{wellpose}$, Equations \eqref{adjoint} has a unique solution $\mathbf{v}\in L^2(0,T;H^1_\sigma(\Omega))$.

By Theorem~\ref{uniquecontinuation}, there exists a positive constant
$N=N(\Omega,\omega,E,T)$ such that
  \begin{equation}\label{inequality}
  \|\mathbf{v}(0)\|_{L^2(\Omega)}\leq N\int^T_{0}\chi_{E}\|\mathbf{v}(t)\|_{L^2(\omega)}\,dt.
\end{equation}
Now, set
\begin{equation*}
  \mathbf{X}\triangleq\left\{\overline{\mathbf{v}}=
  \mathbf{v}\chi_{\omega}\chi_{E}
  :~\mathbf{v}\;\text{ solves Equations}\;\; \eqref{adjoint}
  \;\;\text{with}\;\;\mathbf{v}_T(\cdot)\in L^2_\sigma(\Omega) \right\}.
\end{equation*}
Let $\mathbf{X}$ be endowed with the norm of  $L^1(0,T;L^2(\Omega))$. Clearly, it is a subspace of $L^1(0,T;L^2(\Omega))$.  Next, we define a linear functional $\mathbf{F}:\mathbf{X}\rightarrow\mathbb{R}$ by
\begin{equation*}
  \mathbf{F}(\overline{\mathbf{v}})=-( \mathbf{v}(0),\mathbf{u}_0).
\end{equation*}
Note that $\mathbf{F}$ is well defined. In fact,  if $\overline{\mathbf{v}}_1=\overline{\mathbf{v}}_2$, then it follows  from Theorem \ref{uniquecontinuation} that $\mathbf{v}_1=\mathbf{v}_2$.  By \eqref{inequality}, we have  that
\begin{equation*}
  |\mathbf{F}(\overline{\mathbf{v}})|  \leq \| \mathbf{v}(0)\|_{L^2(\Omega)}\|\mathbf{u}_0\|_{L^2(\Omega)}
     \leq N\|\mathbf{u}_0\|_{L^2(\Omega)}
     \int_E\|\mathbf{v}(t)\|_{L^2
     (\omega)}\,dt.
\end{equation*}
Hence, $\mathbf{F}$ is a bounded linear functional on $\mathbf{X}$. By the Hahn-Banach theorem,  $\mathbf{F}$ can be extended to a bounded linear functional on $L^1(0,T;L^2(\Omega))$. Using the Riesz representation theorem, we get that there exists $\mathbf{f}\in L^{\infty}(0,T;L^2(\Omega))$ such that
\begin{equation*}
  \mathbf{F}(\mathbf{h})=
\int^T_0\int_{\Omega}\mathbf{f}(x,t)\cdot\mathbf{h}(x,t)
\,dxdt
\end{equation*}
and
\begin{equation*}
|\mathbf{F}(\mathbf{h})|\leq N\|\mathbf{u}_0\|_{L^2(\Omega)}\|\mathbf{h}\|_{L^1(0,T;\mathbf
{L}^2(\Omega))},\;\;\text{for all}\;\; \mathbf{h}\in L^1(0,T;\mathbf
{L}^2(\Omega))
\end{equation*}
In particular, for each $\overline{\mathbf{v}}\in\mathbf{X}$, we have that
\begin{equation}\label{long}
  -(\mathbf{v}(0),\mathbf{u}_0)=\int^T_0(\mathbf{f},
  \overline{\mathbf{v}})\,dt
  =\int^T_0(\mathbf{f} \chi_{\omega}\chi_E,\mathbf{v})\,dt.
\end{equation}

We next verify that $\mathbf{u}(T;\mathbf{f}\chi_{\omega}\chi_{E})=0$.  To serve this purpose, we first use \eqref{derivative1} to get
\begin{equation}\label{ccccaaaa}
  \left(\mathbf{u}(T;\mathbf{f}\chi_{\omega}\chi_{E}),\mathbf{v}_T\right)-
  \left(\mathbf{u}(0),\mathbf{v}(0)\right)
     =\int^T_0\langle\mathbf{u}_t,\mathbf{v}\rangle+
  \langle\mathbf{v}_t,\mathbf{u}\rangle\, dt.
\end{equation}
Then, by \eqref{weakformula}, we obtain that
\begin{equation*}
\begin{split}
  \int^T_0\langle\mathbf{u}_t,\mathbf{v}\rangle&+
  \langle\mathbf{v}_t,\mathbf{u}\rangle\, dt
  =\int^T_0(\rot\mathbf{v},\rot\mathbf{u})\,dt+\int^T_0
  (\mathbf{f}\chi_{\omega}\chi_E,\mathbf{v})\,dt\\
  &- \int^T_0(\rot\mathbf{u},\rot\mathbf{v})\,dt
  =\int^T_0( \mathbf{f}\chi_{\omega}\chi_E,\mathbf{v})\,dt.
  \end{split}
\end{equation*}
This, along with \eqref{long} and \eqref{ccccaaaa}, leads to
\begin{equation*}
  (\mathbf{u}(T;\mathbf{f}\chi_{\omega}\chi_{E}),\mathbf{v}_T)=0,~\text{for all}~ \mathbf{v}_T\in L^2_{\sigma}(\Omega).
  \end{equation*}
Hence $\mathbf{u}(T;\mathbf{f}\chi_{\omega}\chi_{E})=0$. This completes the proof.
\end{proof}

\subsection{The Bang-bang Property of the Time and Norm Optimal Control Problem}
$\;\;\;\;$In the sequel, we make use of Corollary~\ref{theoremcan3} to get the bang-bang property for the  minimal norm and minimal time control problems for Stokes equations.
We begin with introducing these problems.
Let $\omega\subset \Omega$ be a nonempty
open subset, and let $\textbf{u}_0\in L^2_\sigma(\Omega)\setminus\{0\}$.
 For each $T>0$, define the following  control constraint set:
$$\mathcal{F}_{T}=\left\{\mathbf{f}\in L^\infty (0,T;L^2(\Omega)):\;
\mathbf{u}(T;\mathbf{f}\chi_\omega)=0 \right\}.$$
 According to Corollary~\ref{theoremcan3}, the set $\mathcal{F}_T$ is nonempty. Consider the minimal norm control problem:
$$(NP)_T:\;\;\;\;M_{T}\equiv \min\left\{\|\mathbf{f}\|_{L^\infty (0,T;L^2(\Omega))}:\;\;
\mathbf{f}\in\mathcal{F}_{T}\right\}.$$
Since $\mathcal{F}_{T}$ is not empty, it follows from  the standard arguments (see, e.g., \cite{unpublished1}) that Problem $(NP)_{T}$ has solutions. A solution of this problem is called  a minimal norm control.

Now, one can use the same methods as those in  \cite{unpublished1}  to prove the following consequence of Corollary~\ref{theoremcan3}:

\begin{corollary}\label{cor1} Problem $(NP)_T$ has the bang-bang property: any minimal norm control $\mathbf{f}$ satisfies that $\|\mathbf{f}(t)\chi_{\omega}\|_{L^2(\Omega)}=M_{T}$ for a.e. $t\in (0,T)$. Consequently, this problem has a
unique minimal norm control in $L^\infty(0,T;L^2(\omega))$.
\end{corollary}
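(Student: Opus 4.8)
The plan is to reduce the bang-bang property to two facts, following the duality scheme of \cite{unpublished1}: a pointwise maximum principle characterizing every minimal norm control through the adjoint system \eqref{adjoint}, and the impossibility of the adjoint observation vanishing on a time set of positive measure. The first is a standard convex-duality argument; the second is precisely the measurable-time observability \eqref{c88}, transported to \eqref{adjoint} by time reversal, and is the decisive input. Existence of minimizers for $(NP)_T$ is already granted, and $M_T>0$ because $\mathbf{u}_0\neq0$ together with the backward uniqueness of \eqref{stokes} (which comes from Lemma~\ref{prop1} via the stream-function correspondence) prevents the free solution from reaching $0$ at time $T$.

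First I would build the separating adjoint state. On terminal data $\mathbf{v}_T\in L^2_\sigma(\Omega)$ of \eqref{adjoint} I consider the convex functional
\[
J(\mathbf{v}_T)=\frac{1}{2}\left(\int_0^T\|\mathbf{v}(t)\|_{L^2(\omega)}\,dt\right)^2+(\mathbf{u}_0,\mathbf{v}(0)).
\]
The observability estimate \eqref{inequality} bounds $\|\mathbf{v}(0)\|_{L^2(\Omega)}$ by $\int_0^T\|\mathbf{v}(t)\|_{L^2(\omega)}\,dt$, so the quadratic term dominates the linear one and $J$ is coercive; the direct method then yields a minimizer $\mathbf{v}^*$, nontrivial since $M_T>0$. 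Matching $J$ against the convex dual of $(NP)_T$ (the Legendre transform of $\tfrac{1}{2}x^2$), I expect the optimality condition to give $\int_0^T\|\mathbf{v}^*(t)\|_{L^2(\omega)}\,dt=M_T$ and $-(\mathbf{u}_0,\mathbf{v}^*(0))=M_T^2$.

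The heart is the maximum principle combined with unique continuation. For an arbitrary minimal norm control $\mathbf{f}$, the weak-formulation identity behind \eqref{ccccaaaa}--\eqref{long}, used with $\mathbf{u}(T;\mathbf{f}\chi_\omega)=0$, gives $-(\mathbf{u}_0,\mathbf{v}^*(0))=\int_0^T(\mathbf{f}\chi_\omega,\mathbf{v}^*)\,dt$. Chaining this with the Cauchy--Schwarz inequality in $L^2(\omega)$ and the bound $\|\mathbf{f}(t)\chi_\omega\|_{L^2(\Omega)}\leq M_T$ a.e.\ produces
\[
M_T^2=\int_0^T(\mathbf{f}\chi_\omega,\mathbf{v}^*)\,dt\leq M_T\int_0^T\|\mathbf{v}^*(t)\|_{L^2(\omega)}\,dt=M_T^2,
\]
so every inequality is an equality: $\mathbf{f}(t)\chi_\omega$ is parallel to $\mathbf{v}^*(t)\chi_\omega$, and $\|\mathbf{f}(t)\chi_\omega\|_{L^2(\Omega)}=M_T$ at a.e.\ $t$ where $\mathbf{v}^*(t)\chi_\omega\neq0$. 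To finish I rule out the vanishing set: if $\{t:\|\mathbf{v}^*(t)\|_{L^2(\omega)}=0\}$ had positive measure $E$, then applying \eqref{c88} to the time-reversed $\mathbf{v}^*$ over $E$ would force $\mathbf{v}^*(0)=0$, hence $\mathbf{v}^*\equiv0$ by backward uniqueness, contradicting $\mathbf{v}^*\neq0$. Therefore $\mathbf{v}^*(t)\chi_\omega\neq0$ for a.e.\ $t$, and $\|\mathbf{f}(t)\chi_\omega\|_{L^2(\Omega)}=M_T$ a.e., which is the bang-bang property.

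Uniqueness then follows from strict convexity of the Hilbert norm. Given two minimal norm controls $\mathbf{f}_1,\mathbf{f}_2$, their average is admissible with norm at most $M_T$, hence also a minimal norm control; applying the bang-bang property to $\mathbf{f}_1$, $\mathbf{f}_2$ and $\tfrac{1}{2}(\mathbf{f}_1+\mathbf{f}_2)$ gives $\|\mathbf{f}_1(t)\chi_\omega\|=\|\mathbf{f}_2(t)\chi_\omega\|=\|\tfrac{1}{2}(\mathbf{f}_1+\mathbf{f}_2)(t)\chi_\omega\|=M_T$ a.e., and the parallelogram law forces $\mathbf{f}_1(t)\chi_\omega=\mathbf{f}_2(t)\chi_\omega$ a.e. I expect the main obstacle to lie not in the observability (already supplied by Theorem~\ref{uniquecontinuation}) but in the functional-analytic step of the second paragraph: the lower semicontinuity of the $L^1$-in-time term under weak convergence and the correct reading of the optimality condition of $J$ where $\mathbf{v}^*$ might a priori vanish; these are exactly the points handled by the method of \cite{unpublished1}.
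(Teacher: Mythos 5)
Your route is genuinely different from the paper's. The paper gives no written proof of this corollary: it defers to the perturbation method of \cite{unpublished1}, which it does display in full in its proof of Corollary~\ref{cor2}. In that scheme one argues by contradiction: if a minimal norm control $\mathbf{f}$ had $\|\mathbf{f}(t)\chi_{\omega}\|_{L^2(\Omega)}\leq M_T-\varepsilon$ on a set $E\subset(0,T)$ of positive measure, then by linearity $\mathbf{u}(T;(1-\lambda)\mathbf{f}\chi_\omega)=\lambda\,\mathbf{u}(T;0)$, and Corollary~\ref{theoremcan3} (applied with initial datum $\lambda\mathbf{u}_0$ and control region $\omega\times E$) supplies a corrective control of size $N\lambda\|\mathbf{u}_0\|_{L^2(\Omega)}$; for $\lambda$ small the combined control is admissible with sup-norm strictly below $M_T$, contradicting minimality. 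That argument needs only Corollary~\ref{theoremcan3} and linearity --- no dual functional, no adjoint minimizer. Your convex-duality scheme is the classical alternative (Fabre--Puel--Zuazua style); it buys more, namely a pointwise representation of every optimal control through $\mathbf{v}^*$, but at a functional-analytic price, and that is exactly where your sketch has a real hole.

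The gap is the existence of the minimizer of $J$. You claim coercivity because \eqref{inequality} bounds $\|\mathbf{v}(0)\|_{L^2(\Omega)}$ by $\int_0^T\|\mathbf{v}(t)\|_{L^2(\omega)}\,dt$; but this makes $J$ coercive only with respect to the observation seminorm, not with respect to $\|\mathbf{v}_T\|_{L^2(\Omega)}$ --- observability controls the state at the \emph{end} of the (time-reversed) evolution, never the rough datum $\mathbf{v}_T$. Concretely, with $A$ the Stokes operator of Remark~\ref{m-t} and $A\mathbf{e}_k=\mu_k\mathbf{e}_k$, the choice $\mathbf{v}_T=\mu_k^{1/2}\mathbf{e}_k$ gives $\mathbf{v}(t)=\mu_k^{1/2}e^{-\mu_k(T-t)}\mathbf{e}_k$, so $\|\mathbf{v}_T\|_{L^2(\Omega)}\to\infty$ while $\int_0^T\|\mathbf{v}(t)\|_{L^2(\omega)}\,dt\leq\mu_k^{-1/2}\to0$. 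Hence minimizing sequences for $J$ need not be bounded in $L^2_\sigma(\Omega)$, there is no weakly convergent subsequence, and ``the direct method then yields a minimizer'' fails as stated; the lower semicontinuity you flag at the end is not the binding issue. The standard repair is to minimize over the completion of $L^2_\sigma(\Omega)$ under $\|\mathbf{v}_T\|_X=\int_0^T\|\mathbf{v}(t)\|_{L^2(\omega)}\,dt$ (this is a norm precisely because of the unique continuation in Theorem~\ref{uniquecontinuation}), extend $\mathbf{v}_T\mapsto\mathbf{v}(0)$ by \eqref{inequality}, and use \eqref{c88} on subintervals $(t,T)$ --- with constant blowing up as $t\to T^-$ --- together with parabolic smoothing to realize elements of the completion as genuine adjoint solutions on $[0,T)$, so that unique continuation still applies to $\mathbf{v}^*$.

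Two smaller points. The strong-duality identities you ``expect'', $\int_0^T\|\mathbf{v}^*(t)\|_{L^2(\omega)}\,dt=M_T$ and $-(\mathbf{u}_0,\mathbf{v}^*(0))=M_T^2$, do not come for free from Legendre matching: you must differentiate $J$ at $\mathbf{v}^*$ and use the Euler--Lagrange identity to manufacture an admissible control of constant norm $\int_0^T\|\mathbf{v}^*\|_{L^2(\omega)}\,dt$, which gives $M_T\leq\int_0^T\|\mathbf{v}^*\|_{L^2(\omega)}\,dt$ and closes the chain; that differentiation is legitimate only where $\|\mathbf{v}^*(t)\|_{L^2(\omega)}\neq0$, so the nonvanishing step must come \emph{before} the optimality condition, not after. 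Fortunately it does: by the one-time unique continuation recorded at the end of the proof of Theorem~\ref{uniquecontinuation}, $\|\mathbf{v}^*(t)\|_{L^2(\omega)}=0$ at a single $t<T$ already forces $\mathbf{v}^*\equiv0$, so your positive-measure detour through the time-reversed \eqref{c88} is correct but stronger than needed. Your justification of $M_T>0$ via Lemma~\ref{prop1} and the parallelogram-law uniqueness are both fine and agree with the paper's pointer to \cite{MR2421326}.
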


\vskip 8pt
Next, for each $M>0$, we define the following  control constraint set:
$$\mathcal{U}_M=\{\mathbf{g}\in L^\infty(\mathbb{R}^+;L^2(\Omega)):
\;\;\|\mathbf{g}(t)\|_{L^2(\Omega)}\leq M\;\;\text{for a.e.}\;\;t\in\mathbb{R}^+\}.$$
Consider the minimal time control problem:
$$(TP)^M:\;\; T_M\equiv \min_{\mathbf{g}\in\mathcal{U}_M}
\left\{t>0:\;\;\textbf{u}(t;\mathbf{g}\chi_{\omega})=0\right\},$$
where $\textbf{u}(\cdot\,;\mathbf{g}\chi_{\omega})$ is the solution
to
\begin{equation}\label{control2}
  \begin{cases}
\mathbf{u}_t-\Delta\mathbf{u}-\nabla p=\mathbf{g}\chi_{\omega}\;\;\;\;&\text{in}\;\;\Omega
\times(0,\mathbb{R}^+),\\
\nabla\cdot\mathbf{u}=0\;\;\;\;\;\;\;&\text{in}\;\;\Omega
\times(0,\mathbb{R}^+),\\
\rot \mathbf{u}=0,\;\;\mathbf{u}\cdot \mathbf{n}=0\;\;\;\;&\text{on}\;\;\partial\Omega\times(0,\mathbb{R}^+),\\
\mathbf{u}(\cdot,0)=\mathbf{u}_0(\cdot) \;\;\;\;&\text{in}\;\;\Omega.
\end{cases}
\end{equation}
According to Corollary~\ref{theoremcan3} and the energy decay property of solutions to homogenous Stokes equations, we
see that $\mathcal{U}_M$ is nonempty. By the standard arguments (see, e.g., \cite[Lemma 3.2]{MR2342129}), Problem $(TP)^M$  has solutions. A solution of this problem is called a minimal time control.

One can follow the similar way as that in  \cite{MR2421326}, \cite{LQ}
 or \cite{wangzhang} to show the following consequence of Corollary~\ref{theoremcan3}:
\begin{corollary}\label{cor2}
Problem $(TP)^M$ has the bang-bang property: any minimal time control $\mathbf{f}$ satisfies that $\|\mathbf{f}(t)\chi_{\omega}\|_{L^2(\Omega)}=M$ for a.e. $t\in (0, T_M)$. Consequently, this problem has a
unique minimal time control in $L^\infty(0,T_M;L^2(\omega))$.
\end{corollary}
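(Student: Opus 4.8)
The plan is to reduce the bang-bang property of the time-optimal problem $(TP)^M$ to that of the norm-optimal problem $(NP)_T$, which is already recorded in Corollary~\ref{cor1}, by exploiting the classical duality between the value functions $M\mapsto T_M$ and $T\mapsto M_T$. Throughout I would write $T^*=T_M$ for the optimal time and let $\mathbf{f}$ be any minimal time control, so that $\|\mathbf{f}(t)\|_{L^2(\Omega)}\leq M$ for a.e. $t\in(0,T^*)$ and $\mathbf{u}(T^*;\mathbf{f}\chi_\omega)=0$. Since replacing $\mathbf{f}$ by $\mathbf{f}\chi_\omega$ alters neither the controlled state nor the constraint, I may assume $\mathbf{f}(t)$ is supported in $\omega$, so that $\|\mathbf{f}(t)\chi_\omega\|_{L^2(\Omega)}=\|\mathbf{f}(t)\|_{L^2(\Omega)}$.

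The crucial step would be to establish the identity $M_{T^*}=M$. The inequality $M_{T^*}\leq M$ is immediate, because $\mathbf{f}$ is admissible for $(NP)_{T^*}$ with $L^\infty(0,T^*;L^2(\Omega))$-norm at most $M$. For the reverse inequality I would argue by contradiction: if $M_{T^*}<M$, there is a control $\mathbf{g}$ steering $\mathbf{u}_0$ to $0$ at time $T^*$ with norm at most $M_{T^*}<M$, and I would deduce that a strict slack in the norm buys a strictly shorter controllability time, i.e. $T_M<T^*$, contradicting optimality. This rests on the monotonicity of $T\mapsto M_T$ (it is non-increasing, since a control reaching $0$ at time $T$ extends by zero to any larger horizon while keeping the state at rest, thanks to the energy decay in Proposition~\ref{wellpose}) together with the left-continuity of $T\mapsto M_T$ at $T^*$; the latter I would obtain from the control-cost estimate \eqref{controlcost} of Corollary~\ref{theoremcan3}, which lets one realize the target at a slightly earlier time by an admissible control once the norm budget is not saturated.

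Granting $M_{T^*}=M$, the minimal time control $\mathbf{f}$ is admissible for $(NP)_{T^*}$ with norm $M=M_{T^*}$, hence it is a minimal norm control for $(NP)_{T^*}$. The bang-bang property of Corollary~\ref{cor1} then forces $\|\mathbf{f}(t)\chi_\omega\|_{L^2(\Omega)}=M_{T^*}=M$ for a.e. $t\in(0,T^*)$, which is precisely the asserted bang-bang property for $(TP)^M$. For uniqueness, given two minimal time controls $\mathbf{f}_1,\mathbf{f}_2$, both are minimal norm controls for $(NP)_{T^*}$ by the preceding step, so the uniqueness part of Corollary~\ref{cor1} yields $\mathbf{f}_1=\mathbf{f}_2$ in $L^\infty(0,T^*;L^2(\omega))$.

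The main obstacle is the equivalence of the two value functions, concentrated in the implication $M_{T^*}<M\Rightarrow T_M<T^*$; everything else is a formal consequence of Corollary~\ref{cor1}. Making this rigorous requires quantitative control of how $M_T$ varies with the horizon near $T^*$, i.e. converting an unused fraction of the norm budget into a genuine reduction of the control time, which is exactly where the cost estimate \eqref{controlcost} and the continuity of the Stokes flow enter. An essentially equivalent alternative, which avoids passing through $(NP)_T$, is a direct contradiction argument: if the bang-bang property failed on a set $E\subset(0,T^*)$ of positive measure on which $\|\mathbf{f}(t)\chi_\omega\|_{L^2(\Omega)}\leq M-\delta$, one would spend the slack $\delta$ to reach the target before $T^*$ using null controllability with controls supported on $\omega\times E$ (Corollary~\ref{theoremcan3}, whose time set may be an arbitrary measurable set of positive measure), the delicate point again being to match the saved time against the available budget, following \cite{MR2421326}, \cite{LQ} and \cite{wangzhang}.
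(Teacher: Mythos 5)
Your proposal is correct, but it follows a genuinely different route from the paper. The paper argues by direct perturbation of the time-optimal pair $(\mathbf{f}^*,t^*)$: assuming slack $\|\mathbf{f}^*(t)\chi_{\omega}\|_{L^2(\Omega)}\leq M-\varepsilon$ on a measurable set $E\subset(0,t^*)$, it shifts the control by $\delta$, applies Corollary~\ref{theoremcan3} with controls supported on $\omega\times E_{\delta}$ (this is exactly where the measurable-in-time controllability is exercised, since the slack set is only measurable) to build a corrector $\mathbf{f}_{\delta}$ of size $\varepsilon/2$ steering the small discrepancy $\mathbf{u}_0-\mathbf{u}(\delta;\mathbf{f}^*\chi_{\omega})$ to rest, and concludes $\mathbf{u}(t^*-\delta;\mathbf{g}_{\delta}\chi_{\omega})=0$ with $\mathbf{g}_{\delta}\in\mathcal{U}_M$, contradicting minimality of $t^*$; uniqueness then comes from the bang-bang property together with the parallelogram identity, not from Corollary~\ref{cor1}. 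You instead establish the value-function identity $M_{T_M}=M$ and import both bang-bang and uniqueness wholesale from Corollary~\ref{cor1}; this is the classical duality route of \cite{MR2421326}, it is conceptually cleaner, it yields the equivalence of $(TP)^M$ and $(NP)_{T_M}$ as a by-product, but it piggybacks on Corollary~\ref{cor1}, whose proof the paper itself only cites to \cite{unpublished1}, whereas the paper's argument is self-contained given Corollary~\ref{theoremcan3}. (Indeed, the ``alternative'' you sketch in your last paragraph is precisely the paper's proof.) Two small points in your key step $M_{T^*}<M\Rightarrow T_M<T^*$ deserve care: first, the constant $N=N(\Omega,\omega,E,T)$ in \eqref{controlcost} varies with the horizon, so to keep it uniform as $\delta\to0^+$ you should steer the discrepancy $\mathbf{u}_0-\mathbf{u}(\delta;\mathbf{g}\chi_{\omega})$ to rest by a \emph{fixed} intermediate time (say $T^*/2$, with $E=(0,T^*/2)$ fixed) and then extend the corrector by zero; second, no energy decay is needed for monotonicity of $T\mapsto M_T$ or for this extension, since the zero state persists under zero control by uniqueness of weak solutions (Proposition~\ref{wellposeness}) --- the decay property enters the paper only to show $\mathcal{U}_M$ contains admissible controls at all. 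With the corrector's horizon fixed, continuity of $t\mapsto\mathbf{u}(t;\mathbf{g}\chi_{\omega})$ in $L^2_{\sigma}(\Omega)$ makes the corrector's norm $o(1)$ as $\delta\to0^+$, and your contradiction closes rigorously.
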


\begin{proof}
Let $\mathbf{f}^*$ be the minimal control and $t^*$ be the minimal time for the problem $(TP)^M$. Suppose by contradiction that there were $E\subset(0,t^*)$ of positive measure  and $\varepsilon>0$
such that
\begin{equation*}
\|\mathbf{f}^*(t)\chi_{\omega}\|_{L^2(\Omega)}\leq M-\varepsilon\;\;\;\text{for a.e.}\;\;t\in E.
\end{equation*}
It suffices to show that there are a positive number $\delta$ with $\delta< t^*$ and a control $\mathbf{g}_{\delta}\in\mathcal{U}_{M}$ such that the following holds:
\begin{equation}\label{cancan2}
\mathbf{u}(t^*-\delta; \chi_{\omega}\mathbf{g}_{\delta})=0.
\end{equation}
This means that $t^*$ could not be the optimal time,
which leads to a contradiction. Indeed, from Corollary~\ref{theoremcan3},
it follows  that for each positive number $\delta$
sufficiently small, there exists a control $\mathbf{f}_{\delta}$,
with the estimate
\begin{equation*}
\|\mathbf{f}_{\delta}(t)\|_{L^\infty(0,t^*-\delta;\,L^2(\Omega))}
\leq\frac{\varepsilon}{2},
\end{equation*}
 such that
\begin{equation*}
\mathbf{z}^\delta(t^*-\delta)=0,
\end{equation*}
where $\mathbf{z}^\delta(\cdot)$ is the weak solution to the following controlled equation:
\begin{equation*}
 \begin{cases}
\mathbf{z}^\delta_t-\Delta \mathbf{z}^\delta-\nabla p^\delta =\mathbf{f}_{\delta}\chi_{\omega}\chi_{E_\delta}
\;\;\;&\text{in}\;\;\Omega\times(0,t^*-\delta),\\
\nabla\cdot \mathbf{z}^\delta=0\;\;\;&\text{in}\;\;\Omega\times(0,t^*-\delta),\\
\rot \mathbf{z}^\delta=0,\;\;\mathbf{z}^\delta\cdot \mathbf{n}=0\;\;&\text{on}\;\;\partial\Omega\times(0,t^*-\delta),\\
\mathbf{z}^\delta(0)=\mathbf{u}_{0}-\mathbf{u}(\delta; \mathbf{f}^*\chi_{\omega}),\;\;&\text{in}\;\;\Omega.
   \end{cases}
 \end{equation*}
Here $E_{\delta}\triangleq\{t>0:\;t+\delta\in E\}$
(Clearly, $|E_\delta|\geq|E|-\delta$). Set
\begin{equation*}
\mathbf{g}_{\delta}(t)=
\begin{cases}
\mathbf{f}^*(t+\delta)+\chi_{E_\delta}(t)\mathbf{f}_{\delta}(t),\;\;
&t\in(0,t^*-\delta),\\
0,\;\;\;\;\;&t\in[t^*-\delta,\infty).
\end{cases}
\end{equation*}
Clearly, $\mathbf{g}_\delta\in\mathcal{U}_{M}$.
It is easy to check that if we choose $\mathbf{g}_{\delta}$
as the control in Equations~\eqref{control2}, the equality (\ref{cancan2}) will
be valid. The  uniqueness of the optimal control follows directly from the bang-bang
property and the parallelogram identity (see, e.g., \cite{MR2421326}).
\end{proof}

\section{Appendix}
\setcounter{equation}{0}
\begin{proof}[\bf Proof of Lemma~\ref{biharmoniclemma}]
First, let $\phi=\Delta\psi$.
  It's clear that $\phi$ solves the following heat equation:
\begin{equation*} \label{heat}
  \begin{cases}
\phi_t-\Delta\phi=0\;\;\;\;&\text{in}\;\;  \Omega\times(0,T),\\
 \phi=0~~~~~~~~\;\;\;\;\;&\text{on}\;\;\partial\Omega\times(0,T),\\
\phi(\cdot,0)=\Delta \psi_0(\cdot) \;\;\;\;&\text{in}\;\;\Omega.
   \end{cases}
 \end{equation*}
 Since $ \Delta\psi_0\in H^{-1}(\Omega)$, by the classical results of heat equation, it has  a unique solution
\begin{equation*}\label{interpolation}
\phi\in C([0,T];H^{-1}(\Omega))\cap C^1((0,T]; H^{-1}(\Omega))\cap C((0,T];H^1_0(\Omega))\;\;\text{with}\;\;\phi(0)=\Delta \psi_0.
\end{equation*}
Consequently, $\Delta\psi\in C((0,T]; H^1_0(\Omega))$.
Next, for each $t_1,t_2\in [0,T]$, we consider the following elliptic equation:
\begin{equation*}
    \begin{cases}
 \Delta\big(\psi(t_1)-\psi(t_2)\big)=\phi(t_1)-\phi(t_2)
 \;\;\;\;&\text{in}\;\;  \Omega,\\
 \psi(t_1)-\psi(t_2)=0 \;\;\;\;&\text{on}\;\;\p\Omega,
   \end{cases}
 \end{equation*}
 the elliptic regularity  indicates that
\begin{equation*}
\|\psi(t_1)-\psi(t_2)\|_{H_0^1(\Omega)}\leq N(\Omega) \|\phi(t_1)-\phi(t_2)\|_{H^{-1}(\Omega)},
\end{equation*}
which, together with $\phi\in
  C([0,T];H^{-1}(\Omega))$, implies that $\psi\in C([0,T]; H^1_0(\Omega))$.  Similarly, $\psi\in C^1((0,T]; H^1_0(\Omega))\cap C((0,T]; H^3(\Omega))$. The uniqueness is obvious.
\end{proof}

\begin{proof}[\bf Proof of Lemma~\ref{rottheorem}]
For each $\mathbf{u}\in L^2_{\sigma}(\Omega)$, there exists a sequence of $\big\{\mathbf{u}_n\}_{n\geq 1}\subset\{\mathbf{v}\in C^{\infty}_0(\Omega;\mathbb{R}^2)~:~\nabla\cdot\mathbf{v}=0\big\}$ (see, e.g., \cite[pp. 13--16]{TemamNavier}) such that
\begin{equation}\label{approximation}
  \mathbf{u}_n\rightarrow\mathbf{u}\quad\text{in}~ L^2(\Omega).
\end{equation}
For each $n\geq 1$, let $\psi_n\in H^1_0(\Omega)$ be the unique solution to
\begin{equation}\label{elliptic}
  \begin{cases}
       -\Delta\psi_n=\rot\mathbf{u}_n\;\;&\text{in}\;\;\Omega,\\
       \psi_n=0\;\;&\text{on}\;\;\partial\Omega.
     \end{cases}
  \end{equation}
  From classical results on elliptic equations, it follows that
\begin{equation*}\label{estimate1}
    \|\psi_n\|_{H^1_0(\Omega)}\leq N(\Omega)\|\mathbf{u}_n\|_{L^2(\Omega)}.
\end{equation*}
  Because $\psi_n=0$ on $\partial \Omega$, we have
  $$\curl\psi_n\cdot\mathbf{n}=\frac{\p\psi_n}{\p\mathbf{\tau}}=0\;\;\mbox{over} \;\;\partial\Omega.
  $$
   This implies that $\curl\psi_n\in L^2_{\sigma}(\Omega)$.
  Observe that Equation \eqref{elliptic} can also be written as $\rot(\curl\psi_n-\mathbf{u}_n)=0$. Since $\Omega$ is simply
  connected, we get that  $\curl\psi_n-\mathbf{u}_n\in\ker(\rot)\cap L^2_{\sigma}(\Omega)=0$ (see, for instance, \cite[Remark 2.2, pp. 32]{femstokes}). In fact, $\ker(\rot)\cap L^2_{\sigma}(\Omega)$ is isomorphic to the first space of cohomology $\mathbb{H}^1(\Omega;\mathbb{R})$, which is zero for a simply connected domain (we refer the readers to \cite[Appendix 1, Remark 1.1, pp. 463]{TemamNavier} for a detailed argument for such topics). Hence, $\curl\psi_n=\mathbf{u}_n$.

Since
\begin{equation*}
  \begin{cases}
       -\Delta(\psi_n-\psi_m)=\rot(\mathbf{u}_n-\mathbf{u}_m)
       \;\;&\text{in}\;\;\Omega,\\
       \psi_n-\psi_m=0\;\;&\text{on}\;\;\partial\Omega,
     \end{cases}
  \end{equation*}
 we have that
    \begin{equation*}
    \|\psi_n-\psi_m\|_{H^1_0(\Omega)}\leq N(\Omega)\|\mathbf{u}_n-\mathbf{u}_m\|_{L^2(\Omega)}.
 \end{equation*}
This, along with \eqref{approximation}, indicates that $\{\psi_n\}_{n\geq 1}$ is a Cauchy sequence in $H^1_0(\Omega)$ and hence converges to some $\psi\in H^1_0(\Omega)$. In conclusion,
  \begin{equation*}
  \begin{cases}
       \curl\psi_n\rightarrow\curl\psi~&\text{in}\;~L^2(\Omega),\\
       \curl\psi_n=\mathbf{u}_n\rightarrow\mathbf{u}~
       &\text{in}~\;L^2(\Omega).
     \end{cases}
  \end{equation*}
  Therefore, $\curl\psi=\mathbf{u}$.
\end{proof}
\vskip 10pt

\begin{proof}[\bf Proof of Proposition \ref{wellposeness}]
We first define on $H^1_{\sigma}(\Omega)\times H^1_{\sigma}(\Omega)$ a bilinear functional for a.e. $t\in(0,T)$ by
\begin{equation}\label{bilinear}
  a(t;\mathbf{u},\mathbf{v})=(\rot\mathbf{u},
  \rot\mathbf{v}).
\end{equation}
Since $\Omega$ is simply connected, we have that (see, e.g., \cite[Remark 3.5, pp. 45]{femstokes})
$$\|\rot\mathbf{v}\|_{L^2(\Omega)}^2\geq N(\Omega)\|\mathbf{v}\|^2_{H^1(\Omega)},\;\;\mathbf{v}\in H^1_\sigma(\Omega).$$
It can be verified that
\begin{equation*}
\begin{cases}
 &|a(t;\mathbf{u},\mathbf{v})|\leq M\|\mathbf{u}\|_{H^1(\Omega)}\|\mathbf{v}\|_{H^1(\Omega)},~ \forall~t\in [0,T],~\forall\;\; \mathbf{u},\mathbf{v}\in H^1_{\sigma}(\Omega).\\
 &a(t;\mathbf{v},\mathbf{v})\geq \Lambda\|\mathbf{v}\|^2_{H^1(\Omega)},~ \forall~t\in [0,T],~\forall\; \mathbf{v}\in H^1_{\sigma}(\Omega),\;\;\text{where}\;\;\Lambda>0.
\end{cases}
\end{equation*}
By the Lions theorem (see, e.g., \cite[Chapitre X, Commentaires, pp. 218]{MR697382}), we obtain that there exists
  \begin{equation*}
  \mathbf{u}\in  C([0,T];L^2_{\sigma}(\Omega))\cap L^2(0,T;H^1_{\sigma}(\Omega))
  ,\quad\mathbf{u}_t\in L^2(0,T;H^{-1}_{\sigma}(\Omega))~\text{with}~ \mathbf{u}(\cdot,0)=\mathbf{u}_0,
\end{equation*}
such that
\begin{equation*}
\langle\mathbf{u}_t,\mathbf{v}\rangle=- (\rot\mathbf{u}(t),\rot\mathbf{v})+
    (\mathbf{f}(t),\mathbf{v}), ~
    \text{for a.e.} ~t\in [0,T]\;\;\text{and for all}\;\;\mathbf{v}\in H^1_\sigma(\Omega).
\end{equation*}
In particular, for any $\mathbf{v}\in L^2(0,T;H^1_{\sigma}(\Omega))$,
\begin{equation*}\label{pointwise}
       \langle\mathbf{u}_t,\mathbf{v}(t)\rangle=- (\rot\mathbf{u}(t),\rot\mathbf{v}(t))+
    (\mathbf{f}(t),\mathbf{v}(t) ), ~
    \text{for a.e.} ~t\in [0,T].
\end{equation*}
  Since the right hand side of above equality is integrable on $(0,T)$, we
  immediately get the desired equality \eqref{weakformula}. By
  a standard energy method, it follows that
\begin{equation*}
 \max_{s\in[0,T]}\|\mathbf{u}(s)\|^2_{L^2(\Omega)}\leq \|\mathbf{u}_0\|^2_{L^2(\Omega)}
 +N(\Omega)\|\mathbf{f}\|^2_{L^2(0,T;L^2(\Omega))},
\end{equation*}
and this completes the proof.
\end{proof}
\begin{remark}\label{m-t}
{\it The bilinear functional  defined by \eqref{bilinear} induces a
self-adjoint maximal monotone operator $A$ in $L^2_\sigma(\Omega)$
with $$D(A)=\{\mathbf{u}\in H^2(\Omega)\cap H^1_\sigma(\Omega); \rot{\mathbf{u}}|_{\partial\Omega}=0\}$$ and
$A\mathbf{u}=-P\Delta \mathbf{u}$ when $\mathbf{u}\in D(A)$.
Here $P$ is the Helmholtz projection operator.
}
\end{remark}

\bigskip

\end{document}